\newtheorem{theorem}{Theorem}[section]
\newtheorem{lemma}{Lemma}[section]
\newtheorem{corollary}{Corollary}[section]
\newtheorem{proposition}[theorem]{Proposition}
\theoremstyle{definition}
\newtheorem{definition}{Definition}[section]
\newtheorem{example}{Example}
\theoremstyle{remark}
\newtheorem{remark}{Remark}
\newcommand{\R}{\mathbb{R}}
\newcommand{\s}{\mathbb{S}}
\newcommand{\Z}{\mathbb{Z}}
\newcommand{\Q}{\mathcal{Q}}
\newcommand{\ria}{\rightarrow}
\newcommand{\om}{\omega}
\newcommand{\n}{\nabla}
\newcommand{\ran}{\rangle}
\newcommand{\lan}{\langle}
\newcommand{\ve}{\varepsilon}
\newcommand{\vp}{\varphi}
\DeclareMathOperator{\hess}{Hess}
\DeclareMathOperator{\ric}{Ric}
\DeclareMathOperator{\di}{div}
\DeclareMathOperator{\tr}{tr}
\DeclareMathOperator{\diam}{diam}
\DeclareMathOperator{\dist}{dist}
\DeclareMathOperator{\sect}{Sect}
\title[Poincar\'e type inequality]
{Poincar\'e type inequality for hypersurfaces and rigidity results}
\author{Hil\'ario Alencar, \, 
M\'arcio Batista\,
\and Greg\'orio Silva Neto\,}
\address{Universidade Fe\-deral de Alagoas, Instituto de Matem\'atica, Macei\'o, 
AL, 57072-900, Brazil}
\email{hilario@mat.ufal.br}
\email{mhbs@mat.ufal.br}
\email{gregorio@im.ufal.br}
\keywords{Poincar\'e type inequality; Rigidity; r-mean curvature; Einstein manifolds; Space forms; Curvature flows; self-similar solution} 
\subjclass[2020]{Primary: 53C21; Secondary:  53C42, 53E10, 58J05, 35J60; 35J15}
\begin{document}

\begin{abstract} 
In this article, under mild constraints on the sectional curvature, we exploit a divergence formula for symmetric endomorphisms to deduce a general Poincar\'e type inequality. We apply such inequality to higher-order mean curvature of hypersurfaces of space forms and Einstein manifolds, to obtain several isoperimetric inequalities, as well as rigidity results for complete $r$-minimal hypersurfaces satisfying a suitable decay of the second fundamental form at infinity. Furthermore, using these techniques, we prove flatness and non-existence results for self-similar solutions to a large class of fully nonlinear curvature flows.

\end{abstract}

\maketitle



\section{Introduction}
 In the last decades, many mathematicians investigated the existence of nice embeddings between spaces of functions and estimates providing regularity of solutions to some PDE's. For a domain $\Omega$ in $\R^n$, a classical estimate that allow us to obtain interesting information on the space $W^{1,p}_0(\Omega)$, $1\leq p<n$, is the Poincar\'e inequality. The reader can learn more about the subject in \cite{dFSZ:19}, \cite{KM:17}, \cite{BGG:17}, \cite{CMP:16}, \cite{KM:13} and references therein. 
 
Various consequences of Poincar\'e type inequalities have been obtained in the literature. For instance, estimates of the volume growth, spectral and regularity of solutions to elliptic equations, estimates of the number of harmonic $L^2$ 1-forms, of the number of ends of a manifold, and others. We point out that some rigidity results are achieved from these inequalities under additional constraints on the curvatures, see \cite{CSZ:97} and \cite{LJ:06}.

\subsection*{Results} In this work, we establish a general Poincar\'e type inequality on submanifolds of suitable Riemannian ambient spaces. Using such estimate and additional mild conditions we obtain rigidity results for hypersurfaces of space forms and of suitable Einstein manifolds, as we briefly describe in the following. 
 \begin{itemize}
     \item [(i)] We prove isoperimetric inequalities for domains of hypersurfaces of $\R^{m+1};$ and
     \item[(ii)] that $(r+1)$-minimal hypersurfaces of the space forms, satisfying a suitable decay on the integral of the $r$-mean curvature over the annuli of geodesic balls at infinity, are foliated by totally geodesic submanifolds, becoming cylinders or totally geodesic hypersurfaces if their Ricci curvature is bounded from bellow.
     \item[(iii)] We also prove that hypersurfaces with a determined constant scalar curvature in Einstein manifolds are totally geodesic, provided the integral of their mean curvature over geodesic spheres satisfy a suitable decay; and
     \item[(iv)] a rigidity result for the hyperplane as the only homothetic self-similar solutions to a wide class of fully nonlinear curvature flows.
 \end{itemize}
 
 \subsection*{Organization of the paper} In section \ref{secnp} we present the basic computations of this work. In section \ref{secpti} we state our main general inequality and apply it for the setting of higher-order mean curvature and to derive isoperimetric inequalities. In section \ref{secrr} we obtain the rigidity results in items (ii) and (iii) above as a consequence of our Poincar\'e type inequality. We conclude the paper by proving, in section \ref{sec-curv-flow}, rigidity results for self-similar solutions to some fully nonlinear curvature flows.


\section{Notations and Preliminaries}\label{secnp}



Let $M$ be a hypersurface of a Riemannian $(m+1)$-manifold $\overline{M}^{m+1}.$ Denote by $\n$ and $\overline{\n}$ the connections of $M$ and $\overline{M}^{m+1},$ respectively. Given $\overline{X}:M\ria T\overline{M}^{m+1}$ a vector field, write $\overline{X}=X^\top + X^\perp,$ where $X^\top\in TM$ and $X^\perp\in TM^\perp.$  Denoting by $\lan\cdot,\cdot \ran$ the metric of $\overline{M}$ and by $B(Y,Z)=\overline{\n}_YZ - \n_YZ$ the second fundamental form of $M,$ where $Y,Z\in TM$ are vector fields, we have
\[
\begin{aligned}
\lan \overline{\n}_Y\overline{X},Z\ran &= \lan \overline{\n}_Y X^\top + \overline{\n}_Y X^\perp,Z\ran\\
&=\lan \overline{\n}_Y X^\top,Z\ran - \lan X^\perp,B(Y,Z)\ran.\\
\end{aligned}
\]
If $\eta$ is a normal vector field, then $X^\perp=\lan \overline{X},\eta\ran\eta.$ It implies
\begin{equation}\label{sec.fund.form}
\begin{aligned}
\lan \overline{\n}_Y\overline{X},Z\ran &= \lan \overline{\n}_Y X^\top,Z\ran - \lan\overline{X},\eta\ran \lan \eta,B(Y,Z)\ran\\
&=\lan \overline{\n}_Y X^\top,Z\ran - \lan\overline{X},\eta\ran\lan A(Y),Z\ran,
\end{aligned}
\end{equation}
where $A:TM\ria TM$ is the Weingarten operator in direction $\eta$ which is given by
\begin{equation}\label{shape}
\lan A(V),W\ran = \lan \eta, B(V,W)\ran, \ V,W\in TM.
\end{equation}



We first state a general divergence formula which will be useful for us in the next section. A similar formula was obtained by the first and third named authors in \cite{AN17}.

\begin{proposition}\label{prop-div-P1-const} If $M$ is a hypersurface of a $(m+1)$-dimensional Riemannian manifold $\overline{M}^{m+1},$ $m\geq2,$ and $\overline{X}:M\ria T\overline{M}$ is a vector field, then, for every symmetric linear operator $T:TM\to TM$ and every smooth function $f:M\to\R,$ it holds
\begin{equation}\label{div-Pr-const}
\begin{aligned}
\di_f(T(X^\top)) &= -\lan X^\top, T(\n f)\ran + \tr\left(E\mapsto T\left(\left(\overline{\n}_E\overline{X}\right)^\top\right)\right)\\
&\quad + \lan \overline{X},\eta\ran \tr(A T) + (\di T)(X^\top).
\end{aligned}
\end{equation}
Here, $\di_f(Y)=e^f\di(e^{-f}Y)$ is the weighted divergence, $(\di T)(Y)=\tr(E\mapsto(\n_E T)(Y)),$ and $\tr$ denotes the trace of the operator.
\end{proposition}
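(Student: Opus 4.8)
The plan is to strip off the weight first and then compute in a local frame. Recall the elementary identity $\di_f(Y)=e^f\di(e^{-f}Y)=\di(Y)-\lan\n f,Y\ran$, valid for any tangent field $Y$ on $M$. Applying it to $Y=T(X^\top)$ and using that $T$ is symmetric, the term $-\lan\n f,T(X^\top)\ran=-\lan X^\top,T(\n f)\ran$ splits off, so it suffices to prove the unweighted identity
\[
\di(T(X^\top))=\tr\left(E\mapsto T\left(\left(\overline{\n}_E\overline{X}\right)^\top\right)\right)+\lan\overline{X},\eta\ran\tr(AT)+(\di T)(X^\top).
\]

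Fix $p\in M$ and pick a local orthonormal frame $\{e_i\}_{i=1}^m$ of $TM$ that is geodesic at $p$ (so $\n_{e_i}e_j=0$ at $p$). By the Leibniz rule $\n_{e_i}(T(X^\top))=(\n_{e_i}T)(X^\top)+T(\n_{e_i}X^\top)$, hence at $p$
\[
\di(T(X^\top))=\sum_{i}\lan(\n_{e_i}T)(X^\top),e_i\ran+\sum_{i}\lan T(\n_{e_i}X^\top),e_i\ran.
\]
The first sum is $(\di T)(X^\top)$ by definition. In the second, move $T$ across the inner product by symmetry, $\lan T(\n_{e_i}X^\top),e_i\ran=\lan\n_{e_i}X^\top,T(e_i)\ran$. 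Since $B(e_i,\cdot)$ takes values in the normal bundle and $T(e_i)\in TM$, we may replace $\n_{e_i}X^\top$ by $\overline{\n}_{e_i}X^\top$ without changing this inner product, so formula \eqref{sec.fund.form} applies with $Y=e_i$ and $Z=T(e_i)$, giving $\lan\n_{e_i}X^\top,T(e_i)\ran=\lan\overline{\n}_{e_i}\overline{X},T(e_i)\ran+\lan\overline{X},\eta\ran\lan A(e_i),T(e_i)\ran$.

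It remains to identify the two resulting sums. Because $T(e_i)\in TM$, one has $\lan\overline{\n}_{e_i}\overline{X},T(e_i)\ran=\lan(\overline{\n}_{e_i}\overline{X})^\top,T(e_i)\ran=\lan T((\overline{\n}_{e_i}\overline{X})^\top),e_i\ran$, whose sum over $i$ is exactly $\tr(E\mapsto T((\overline{\n}_E\overline{X})^\top))$; and $\sum_i\lan A(e_i),T(e_i)\ran=\sum_i\lan T(A(e_i)),e_i\ran=\tr(AT)$. Substituting back and restoring the weight term proves \eqref{div-Pr-const}. The argument is essentially bookkeeping; the only points requiring attention are keeping track of whether $\n$ or $\overline{\n}$ appears — the discrepancy being harmless on tangential components since $B$ is normal-valued — and the repeated use of the symmetry of $T$ to shift it from one slot of $\lan\cdot,\cdot\ran$ to the other. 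I do not anticipate a genuine obstacle.
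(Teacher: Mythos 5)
Your proof is correct and follows essentially the same route as the paper's: both rest on the splitting $\di_f = \di - \lan\n f,\cdot\ran$, the symmetry of $T$, the Leibniz rule for $\n_{E}T$, and formula \eqref{sec.fund.form} to trade $\overline{\n}_{e_i}\overline{X}$ for $\overline{\n}_{e_i}X^\top$ plus the shape-operator correction. The only cosmetic difference is the direction of the computation — you expand $\di(T(X^\top))$ outward while the paper starts from the trace term $\tr\left(E\mapsto T\left(\left(\overline{\n}_E\overline{X}\right)^\top\right)\right)$ and works backward — but the algebraic content is identical.
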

\begin{proof}
Let $\{e_1,e_2,\ldots,e_m\}$ be an orthonormal frame in $TM$ and $\overline{X}\in T\overline{M}.$ First, since $T$ is self-adjoint, we have 
\begin{equation}\label{eq.trXP_1}
\begin{aligned}
\tr\left(E\mapsto T\left(\left(\overline{\n}_E\overline{X}\right)^\top\right)\right) &= \sum_{i=1}^m \left\lan T\left(\left(\overline{\n}_{e_i}\overline{X}\right)^\top\right),e_i\right\ran =\sum_{i=1}^m \lan\overline{\n}_{e_i}\overline{X},T(e_i)\ran.\\
\end{aligned}
\end{equation}
By using (\ref{sec.fund.form}), p.\pageref{sec.fund.form}, and the self-adjointness of $A,$ we obtain
\[
\begin{aligned}
\sum_{i=1}^m \lan \overline{\n}_{e_i}\overline{X}, T(e_i)\ran &= \sum_{i=1}^m \lan \overline{\n}_{e_i} X^\top, T(e_i)\ran - \left(\sum_{i=1}^m \lan A(e_i),T(e_i)\ran\right)\lan \overline{X},\eta\ran\\
&=\sum_{i=1}^m \lan \overline{\n}_{e_i} X^\top, T(e_i)\ran - \left(\sum_{i=1}^m \lan (A T)(e_i),e_i\ran\right)\lan \overline{X},\eta\ran\\
&=\sum_{i=1}^m \lan\overline{\n}_{e_i}X^\top, T(e_i)\ran - \tr(A T)\lan\overline{X},\eta\ran.
\end{aligned}
\]
Thus,
\[
\sum_{i=1}^m \lan\overline{\n}_{e_i}X^\top, T(e_i)\ran = \tr\left(E\mapsto T\left(\left(\overline{\n}_E\overline{X}\right)^\top\right)\right) + \tr(AT)\lan \overline{X},\eta\ran.
\]
On the other hand, the self-adjointness of $T$ implies
\[
\begin{aligned}
\sum_{i=1}^m \lan \overline{\n}_{e_i} X^\top,T(e_i)\ran &= \sum_{i=1}^m \lan \n_{e_i} X^\top + B(e_i,X^\top),T(e_i)\ran \\
&=\sum_{i=1}^m \lan T(\n_{e_i} X^\top), e_i\ran \\
&= \sum_{i=1}^m \lan \n_{e_i}(T(X^\top)),e_i\ran - \sum_{i=1}^m \lan (\n_{e_i} T)(X^\top),e_i\ran\\
&=\di(T(X^\top)) - \tr(E\mapsto (\n_E T)(X^\top))\\
&=\di(T(X^\top)) - (\di T)(X^\top).
\end{aligned}
\]
Therefore, 
\[
\di(T(X^\top)) = \tr\left(E\mapsto T\left(\left(\overline{\n}_E\overline{X}\right)^\top\right)\right) + (\di T)(X^\top) + \tr(A T)\lan \overline{X},\eta\ran.
\]
Since
\[
\di_f(Y)=e^f\di(e^{-f}Y)=\di(Y)-\lan\n f, Y\ran, \ Y\in TM,
\]
we conclude the result.
\end{proof}

In the next lemma we will estimate $\tr\left(E\mapsto T\left(\left(\overline{\n}_E\overline{X}\right)^\top\right)\right)$ for a special vector field $\overline{X},$ in terms of $\tr T,$ the distance function of $\overline{M}$ and the bounds of the sectional curvatures of $\overline{M}.$ This result is essentially in \cite{BM}, Proposition 2.2, p.109, but by the difference of notations between the two articles and by the sake of completeness, we include a (different) proof here.


\begin{lemma}\label{lemma-main1}
Let $\overline{M}^{m+1},$ $m \geq 2,$ be a Riemannian $(m+1)$-dimensional manifold whose sectional curvatures satisfy
\[
\sect_{\overline{M}}(\overline{V},\overline{\n}\rho)\leq -\frac{G''(\rho)}{G(\rho)},\ \forall\ \overline{V}\in T\overline{M},\ \overline{V}\perp\overline{\n}\rho,
\]
for a class $\mathcal{C}^2$ nondecreasing function $G:[0,b)\to\R,$ which is positive on $(0,b)$ for some $b>0,$ and $\rho(x)=\rho(x_0,x)$ is the geodesic distance of $\overline{M}^{m+1}$ starting at a point $x_0\in \overline{M}^{m+1}.$ Let $M$ be a hypersurface of $\overline{M}^{m+1}$ and $T:TM\to TM$ be a nonnegative symmetric linear operator. If $x\in M$ satisfies $\rho(x)<i(\overline{M},x_0),$ where $i(\overline{M},x_0)$ is the injectivity radius of $\overline{M}^{m+1}$ at $x_0,$ then the vector field $\overline{X}=G(\rho)\overline{\n}\rho$ satisfies
\begin{equation}
\tr\left(E\mapsto T\left(\left(\overline{\n}_E\overline{X}\right)^\top\right)\right)(x)\geq G'(\rho(x))(\tr T)(x).
\end{equation}

\end{lemma}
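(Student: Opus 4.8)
The plan is to reduce the estimate to the classical Hessian comparison theorem for the distance function $\rho$ in $\overline{M}$. Since $T$ is self-adjoint and non-negative, formula \eqref{eq.trXP_1} gives
\[
\tr\left(E\mapsto T\left(\left(\overline{\n}_E\overline{X}\right)^\top\right)\right)(x) = \sum_{i=1}^m \lan \overline{\n}_{e_i}\overline{X}, T(e_i)\ran,
\]
for an orthonormal frame $\{e_1,\dots,e_m\}$ of $T_xM$. Since $\overline{X}=G(\rho)\overline{\n}\rho$, the product rule yields $\overline{\n}_{e_i}\overline{X} = G'(\rho)\lan \overline{\n}\rho, e_i\ran \overline{\n}\rho + G(\rho)\,\overline{\n}_{e_i}\overline{\n}\rho$, so that $\lan \overline{\n}_{e_i}\overline{X}, T(e_i)\ran = G'(\rho)\lan\overline{\n}\rho,e_i\ran\lan\overline{\n}\rho,T(e_i)\ran + G(\rho)\hess\rho(e_i, T(e_i))$, where $\hess\rho$ is the Hessian of $\rho$ computed in $\overline{M}$ (restricted to tangent directions of $M$). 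Summing over $i$ and using self-adjointness of $T$ reorganizes the first term as a non-negative contribution.

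The core step is then to bound $\hess\rho$. By the Hessian comparison theorem (this is where the sectional curvature hypothesis $\sect_{\overline{M}}(\overline{V},\overline{\gamma}')\leq -G''(t)/G(t)$ and the injectivity radius condition $\rho(x)<i(\overline{M})$ enter), one has
\[
\hess\rho(\overline{V},\overline{V}) \geq \frac{G'(\rho)}{G(\rho)}\bigl(|\overline{V}|^2 - \lan \overline{V},\overline{\n}\rho\ran^2\bigr)
\]
for all $\overline{V}\in T\overline{M}$; the radial direction contributes $\hess\rho(\overline{\n}\rho,\cdot)=0$. Applying this with $\overline{V}=e_i$ and contracting against $T$ — legitimate because $T\geq 0$, so $\sum_i \lan \overline{V}_i, T(e_i)\ran$-type contractions of a non-negative quadratic form stay non-negative — one gets $\sum_i G(\rho)\hess\rho(e_i,T(e_i)) \geq \frac{G'(\rho)}{G(\rho)}\cdot G(\rho)\bigl(\tr T - \sum_i \lan e_i,\overline{\n}\rho\ran\lan\overline{\n}\rho, T(e_i)\ran\bigr)$. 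Combining with the first term from the product rule, the terms involving $\sum_i \lan e_i,\overline{\n}\rho\ran\lan\overline{\n}\rho,T(e_i)\ran$ cancel exactly, leaving $\tr\left(E\mapsto T\left(\left(\overline{\n}_E\overline{X}\right)^\top\right)\right)(x)\geq G'(\rho(x))(\tr T)(x)$, using $G'\geq 0$ since $G$ is nondecreasing.

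The main obstacle is handling the contraction against a general non-negative $T$ cleanly: one must argue that if a symmetric bilinear form $Q$ on $T_xM$ satisfies $Q(\overline{V},\overline{V})\geq c\,|\overline{V}_{\perp\rho}|^2$ (where $\overline{V}_{\perp\rho}$ is the component orthogonal to $\overline{\n}\rho$), then $\sum_i Q(e_i, T(e_i)) \geq c\sum_i \lan (e_i)_{\perp\rho}, T((e_i)_{\perp\rho})\ran$, which follows by diagonalizing $T$ (work in an eigenbasis, where $T(e_i)=\lambda_i e_i$ with $\lambda_i\geq 0$, reducing the sum to $\sum_i \lambda_i Q(e_i,e_i)\geq c\sum_i\lambda_i|(e_i)_{\perp\rho}|^2$, each term handled separately). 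I would also make precise that $\hess\rho$ here denotes the ambient Hessian evaluated on vectors tangent to $M$, so no second fundamental form correction appears — the tangential projection in $(\overline{\n}_E\overline{X})^\top$ is automatically compatible with taking $E\in T_xM$. A minor point worth a sentence is the degenerate case $\rho(x)=0$, where $G(0)=0$ and the inequality is trivial or handled by continuity.
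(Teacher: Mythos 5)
Your proof is correct and follows essentially the same route as the paper's: both diagonalize $T$ in an eigenbasis at $x$, apply the ambient Hessian comparison theorem to bound $\overline{\n}_{e_i}\overline{\n}\rho$, and combine the radial term $G'(\rho)\lan\overline{\n}\rho,e_i\ran\lan\overline{\n}\rho,T(e_i)\ran$ with the orthogonal Hessian contribution to arrive at $G'(\rho)\tr T$. The only cosmetic difference is that the paper explicitly introduces unit projections $Y_i$ of $e_i$ onto $\gamma'^\perp$ and expands $e_i=b_iY_i+c_i\gamma'$ with $b_i^2+c_i^2=1$, whereas you apply the Hessian comparison inequality directly to $e_i$ and observe the exact cancellation of the radial terms, which is slightly leaner bookkeeping that produces the same computation.
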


\begin{proof}
Let $\{e_1, e_2,\ldots, e_m\}$ be an orthonormal basis of $T_xM$ composed by eigenvectors of $T$ in $x\in M,$ i.e., 
\[
T(e_i(x))=\theta_i(x)e_i(x), \ i=1,2,\ldots,m.
\]
Since we are assuming that $\rho(x)<i(\overline{M},x_0),$ the function $\rho$ is differentiable. Thus 
\[
\begin{aligned}
\tr\left(E\longmapsto T\left(\left(\overline{\n}_E\overline{X}\right)^\top\right)\right)&= \sum_{i=1}^m \lan\overline{\n}_{e_i}\overline{X}, T(e_i)\ran = \sum_{i=1}^m\theta_i\lan\overline{\n}_{e_i} \overline{X},e_i\ran\\
&=\sum_{i=1}^m\theta_i\lan\overline{\n}_{e_i}\left(G(\rho) \overline{\n}\rho\right),e_i\ran\\
&=\sum_{i=1}^m\theta_i\left[G'(\rho)\lan\overline{\n}\rho,e_i\ran^2 + G(\rho)\hess_{\overline{M}}\rho(e_i,e_i)\right].\\
\end{aligned}
\]
By using the hypothesis and the hessian comparison theorem (see Theorem 2.3, p. 29 of \cite{PRS:2008}), we have
\[
\hess_{\overline{M}}\rho(e_i,e_i)\geq \frac{G'(\rho)}{G(\rho)}[\lan e_i,e_i\ran-\lan\overline{\n}\rho,e_i\ran^2].
\]
This gives
\[
\begin{aligned}
\tr\left(E\mapsto T\left(\left(\overline{\n}_E\overline{X}\right)^\top\right)\right) &\geq G'(\rho)\sum_{i=1}^m \theta_i\lan e_i,e_i\ran = G'(\rho)\sum_{i=1}^m \lan T e_i,e_i\ran\\
&=G'(\rho)(\tr T).
\end{aligned}
\]
\end{proof}

\begin{remark}\label{remark-Rn}
Notice that, in $\overline{M}^{m+1}=[0,b)\times\mathbb{S}^{m},$ with the metric $\lan\cdot,\cdot\ran_{\overline{M}}=dt^2+G(t)^2d\om^2,$ where $d\om^2$ is the metric of $\mathbb{S}^{m},$ the inequality in the Lemma \ref{lemma-main1} becomes an equality and we do not need to assume that $T$ is nonnegative definite, i.e., for hypersurfaces of $\overline{M}^{m+1}$ and for every symmetric linear operator $T:TM\to TM$ we have
\[
\tr\left(E\mapsto T\left(\left(\overline{\n}_E\overline{X}\right)^\top\right)\right)= G'(\rho)(\tr T).
\]
\end{remark}


\section{Poincar\'e type inequality}\label{secpti}

In the next results, we denote by $B_R(x_0)$ the extrinsic ball of $\overline{M}^{m+1}$ with center at $x_0\in\overline{M}^{m+1}$ and radius $R.$ We also denote by $i(\overline{M},x_0)$ the injectivity radius of $\overline{M}^{m+1}$ for geodesics starting at $x_0\in\overline{M}^{m+1}.$

\begin{theorem}\label{Theo-Poincare}
Let $\overline{M}^{m+1},$ $m \geq 2,$ be a Riemannian $(m+1)$-dimensional manifold whose sectional curvatures satisfy
\begin{equation}\label{sectional}
\sect_{\overline{M}}(\overline{V},\overline{\n}\rho)\leq -\frac{G''(\rho)}{G(\rho)},\ \forall\ \overline{V}\in T\overline{M},\ \overline{V}\perp\overline{\n}\rho,    
\end{equation}
for a class $\mathcal{C}^2$ nondecreasing function $G:[0,b)\to\R,$ which is positive on $(0,b)$ for some $b>0,$ and $\rho(x)=\rho(x_0,x)$ is the geodesic distance of $\overline{M}^{m+1}$ starting at a point $x_0\in \overline{M}^{m+1}.$ Let $M$ be a hypersurface of $\overline{M}^{m+1},$ $T:TM\to TM$ be a nonnegative symmetric linear operator and $\Omega\subset M$ be a connected and open domain with compact closure such that $\overline{\Omega}\cap \partial M=\emptyset$. If $\Omega\subset B_R(x_0)$ with $R<i(\overline{M},x_0),$ then, for every class $\mathcal{C}^{1}$ functions $u,f\colon M\ria\R,$ with $u$ nonnegative and compactly supported in $\Omega,$ we have
\begin{equation}\label{Poincare-ineq-geral}
\begin{aligned}
\int_\Omega G'(\rho) u (\tr T) e^{-f}d\mu &\leq G(R)\int_\Omega |T(\n u - u\n f)| e^{-f}d\mu\\
&\ + G\left(R\right)\int_\Omega u \left[||\tr(A  T)| -(\di T)(\n \rho)|\right]e^{-f}d\mu.
\end{aligned}
\end{equation}
Moreover, if $\overline{M}^{m+1}=[0,b)\times\mathbb{S}^{m},$ with the metric $\lan\cdot,\cdot\ran_{\overline{M}}=dt^2+G(t)^2d\om^2,$ where $d\om^2$ is the metric of $\mathbb{S}^{m},$ then it is not necessary to assume that $T$ is nonnegative.
\end{theorem}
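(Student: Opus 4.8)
The plan is to combine the divergence formula of Proposition~\ref{prop-div-P1-const} with the pointwise lower bound of Lemma~\ref{lemma-main1}, applied to the vector field $\overline{X}=G(\rho)\overline{\n}\rho$, and then integrate against the nonnegative test function $u$. First I would fix a point $x_1\in\overline{\Omega}$ realizing, together with another point of $\overline{\Omega}$, something close to the extrinsic diameter, or more simply fix the basepoint $x_0$ so that $\rho=\dist_{\overline{M}}(x_0,\cdot)$ satisfies $\rho\leq \diam\Omega/2$ on $\overline{\Omega}$; the condition $\diam\Omega<2i(\overline{M})$ guarantees $\rho(x)<i(\overline{M})$ for all $x\in\overline{\Omega}$, so $\rho$ is smooth on a neighborhood of $\supp u$, there are no conjugate points, and $|\overline{\n}\rho|=1$, hence $|\overline{X}|=G(\rho)\leq G(\diam\Omega/2)$ using that $G$ is nondecreasing. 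With this $\overline{X}$, Proposition~\ref{prop-div-P1-const} reads
\begin{equation*}
\di_f(T(X^\top)) = -\lan X^\top,T(\n f)\ran + \tr\!\left(E\mapsto T\!\left(\left(\overline{\n}_E\overline{X}\right)^\top\right)\right) + \lan\overline{X},\eta\ran\tr(AT) + (\di T)(X^\top),
\end{equation*}
and Lemma~\ref{lemma-main1} bounds the middle trace term from below by $G'(\rho)(\tr T)$.

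Next I would multiply this identity by $u\geq 0$, integrate over $\Omega$ with respect to $e^{-f}d\mu$, and integrate the left-hand side by parts. Since $u$ is compactly supported in $\Omega$, the weighted divergence theorem gives $\int_\Omega u\,\di_f(T(X^\top))\,e^{-f}d\mu = -\int_\Omega \lan T(X^\top),\n u\ran e^{-f}d\mu$. Rearranging, and using $\lan X^\top,T(\n f)\ran=\lan T(X^\top),\n f\ran$ by self-adjointness of $T$, the trace lower bound from Lemma~\ref{lemma-main1}, and the identity $X^\top=(\overline{X})^\top=G(\rho)(\overline{\n}\rho)^\top=G(\rho)\n\rho$, one gets
\begin{equation*}
\int_\Omega G'(\rho)\,u\,(\tr T)\,e^{-f}d\mu \leq \int_\Omega \lan T(X^\top),\n u - u\n f\ran e^{-f}d\mu + \int_\Omega u\left[\,|\lan\overline{X},\eta\ran\tr(AT) + (\di T)(X^\top)|\,\right]e^{-f}d\mu.
\end{equation*}
Then I would estimate the right-hand side termwise: $|\lan T(X^\top),\n u-u\n f\ran|\leq |X^\top|\,|T(\n u-u\n f)|\leq G(\diam\Omega/2)\,|T(\n u-u\n f)|$ (here one uses $|X^\top|\le|\overline{X}|=G(\rho)\le G(\diam\Omega/2)$ and that $T$ is self-adjoint so $\lan T(X^\top),W\ran=\lan X^\top,T(W)\ran$ with $|X^\top|\le G(\diam\Omega/2)$), and similarly $|\lan\overline{X},\eta\ran\tr(AT)+(\di T)(X^\top)| = |G(\rho)\lan\overline{\n}\rho,\eta\ran\tr(AT)+G(\rho)(\di T)(\n\rho)|\leq G(\diam\Omega/2)\,\big|\,|\tr(AT)| - (\di T)(\n\rho)\,\big|$, where the last step uses $|\lan\overline{\n}\rho,\eta\ran|\le 1$ and a triangle/reverse-triangle inequality to match the stated form $\big|\,|\tr(AT)|-(\di T)(\n\rho)\,\big|$. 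Collecting these yields \eqref{Poincare-ineq-geral}. For the last sentence of the theorem, I would replace the appeal to Lemma~\ref{lemma-main1} by Remark~\ref{remark-Rn}, which gives the trace term \emph{exactly} equal to $G'(\rho)(\tr T)$ in the warped product $\overline{M}=[0,b)\times\mathbb{S}^m$ without any sign assumption on $T$, so the same integration-by-parts argument goes through verbatim.

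The main obstacle I anticipate is getting the precise combination $\big|\,|\tr(AT)| - (\di T)(\n\rho)\,\big|$ on the right-hand side rather than the cruder $|\tr(AT)| + |(\di T)(\n\rho)|$: this requires being slightly careful about signs when bounding $\lan\overline{X},\eta\ran\tr(AT)+(\di T)(X^\top)$, pulling the common factor $G(\rho)$ out, using $|\lan\overline{\n}\rho,\eta\ran|\le1$ to dominate the first summand by $|\tr(AT)|$ while keeping the second as $(\di T)(\n\rho)$, and then applying the reverse triangle inequality. A secondary technical point is justifying that $\rho$ is $\mathcal{C}^1$ (indeed smooth) on $\supp u$ and that $\n\rho=(\overline{\n}\rho)^\top$ makes sense there — this is exactly where the hypotheses $\overline{\Omega}\cap\partial M=\emptyset$, $\overline{\Omega}$ compact, and $\diam\Omega<2i(\overline{M})$ are used, the latter ensuring $\rho(x)<i(\overline{M})$ on $\overline{\Omega}$ after centering $\rho$ appropriately. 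Everything else is a routine assembly of the two preliminary results together with the weighted divergence theorem.
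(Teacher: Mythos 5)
Your plan is essentially the paper's: invoke Proposition~\ref{prop-div-P1-const} and Lemma~\ref{lemma-main1} with $\overline{X}=G(\rho)\overline{\n}\rho$ after choosing the center $x_0$ of the smallest extrinsic ball containing $\overline\Omega$ so that $\rho\le\diam\Omega/2$, multiply by $u$, integrate against $e^{-f}d\mu$, use the weighted divergence theorem, rearrange, and estimate. The first-term estimate $|\lan T(X^\top),\n u-u\n f\ran|\le|X^\top||T(\n u-u\n f)|\le G(\diam\Omega/2)|T(\n u-u\n f)|$ is fine, and so is the final sentence via Remark~\ref{remark-Rn}.

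However, your treatment of the remaining terms contains a genuine error, and it sits exactly at the spot you flagged as the ``main obstacle.'' After the rearrangement, the quantity that must be bounded from above is
\[
-\bigl[\lan\overline{X},\eta\ran\tr(AT)+(\di T)(X^\top)\bigr]
 = G(\rho)\bigl[\lan-\overline{\n}\rho,\eta\ran\tr(AT)-(\di T)(\n\rho)\bigr],
\]
\emph{without} an outer absolute value. You instead replace it by $\bigl|\lan\overline{X},\eta\ran\tr(AT)+(\di T)(X^\top)\bigr|$ and then claim
\[
G(\rho)\bigl|\lan\overline{\n}\rho,\eta\ran\tr(AT)+(\di T)(\n\rho)\bigr|\le G\!\left(\tfrac{\diam\Omega}{2}\right)\bigl||\tr(AT)|-(\di T)(\n\rho)\bigr|.
\]
This inequality is false. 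Take, at a point, $\tr(AT)=1$, $\lan\overline{\n}\rho,\eta\ran=1$, $(\di T)(\n\rho)=1$: the left side is $2G(\rho)>0$ while the right side is $0$. In general, taking the absolute value of the whole sum only lets you dominate it by $G(\diam\Omega/2)\,(|\tr(AT)|+|(\di T)(\n\rho)|)$, which is strictly weaker than what the theorem asserts; no triangle or reverse-triangle inequality recovers the stated $\bigl||\tr(AT)|-(\di T)(\n\rho)\bigr|$ once the outer absolute value has been introduced.

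The paper's argument avoids this by never taking that outer absolute value. One keeps the sign coming from the rearrangement, bounds \emph{only} the first summand using $\lan-\overline{\n}\rho,\eta\ran\tr(AT)\le|\tr(AT)|$, and leaves the second summand as $-(\di T)(\n\rho)$, obtaining
\[
G(\rho)\bigl[\lan-\overline{\n}\rho,\eta\ran\tr(AT)-(\di T)(\n\rho)\bigr]\le G(\rho)\bigl[|\tr(AT)|-(\di T)(\n\rho)\bigr].
\]
Then one splits into two cases. If $|\tr(AT)|-(\di T)(\n\rho)\ge0$, monotonicity of $G$ and $\rho\le\diam\Omega/2$ give the bound $G(\diam\Omega/2)\bigl[|\tr(AT)|-(\di T)(\n\rho)\bigr]\le G(\diam\Omega/2)\bigl||\tr(AT)|-(\di T)(\n\rho)\bigr|$; if it is negative, the left side is $\le0$ and hence trivially $\le G(\diam\Omega/2)\bigl||\tr(AT)|-(\di T)(\n\rho)\bigr|$. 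The key tool is $x\le|x|$ combined with nonnegativity and monotonicity of $G$, not any form of the triangle inequality. To repair your write-up, drop the absolute value on $\lan\overline{X},\eta\ran\tr(AT)+(\di T)(X^\top)$, keep the minus sign from the rearrangement, and run the two-case argument above.
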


\begin{proof}[Proof of Theorem \ref{Theo-Poincare}.]


For every nonnegative class $\mathcal{C}^1$ function $u:M\to\R,$ it holds
\[
\begin{aligned}
\di_f(uT(X^\top))&=e^f\di(e^{-f} u T(X^\top))\\
&=u\di_f(T(X^\top))+\lan\n u, T(X^\perp)\ran,
\end{aligned}
\]
and so we have, using Proposition \ref{prop-div-P1-const} and Lemma \ref{lemma-main1} for $\overline{X}=G(\rho)\overline{\n}\rho,$
\[
\begin{aligned}
\di_f(uT(X^\top))&\geq G(\rho) \lan \n\rho, T(\n u - u\n f)\ran + uG'(\rho)(\tr T)\\
&\quad+ u G(\rho)\lan\overline{\n}\rho,\eta\ran\tr(A  T) + u G(\rho) (\di T)(\n \rho).
\end{aligned}
\]
On the other hand, by divergence theorem, 
\[
\int_\Omega \di_f (u T(X^\top))e^{-f}d\mu = \int_\Omega \di (e^{-f} u T(X^\top))d\mu =0,
\]
which implies, after integration and some rearrangement,
\begin{equation}\label{ineq-poin}
\begin{aligned}
\int_\Omega u G'(\rho)(\tr T) e^{-f}d\mu &\leq \int_\Omega G(\rho) \lan -\n\rho, T(\n u - u\n f)\ran e^{-f}d\mu\\
&\quad+\int_\Omega u G(\rho)\lan-\overline{\n}\rho,\eta\ran\tr(A  T)e^{-f}d\mu\\
&\quad+\int_\Omega u G(\rho)(\di T)(-\n \rho)e^{-f}d\mu.
\end{aligned}
\end{equation}

Since $\Omega\subset B_R(x_0),$ then, for all $x\in\Omega,$ it holds $\rho(x)\leq R.$ Now, since $G$ is increasing and by using Cauchy-Schwartz inequality, we have
\[
\begin{aligned}
\int_\Omega u  G'(\rho)(\tr T)& e^{-f}d\mu \leq G(R)\int_\Omega |T(\n u - u\n f)| e^{-f}d\mu\\
&\quad+G(R)\int_\Omega u \left||\tr(A  T)| - (\di T)(\nabla \rho)\right|e^{-f}d\mu.
\end{aligned}
\]
This gives \eqref{Poincare-ineq-geral}. When $\overline{M}^{m+1}=[0,b)\times\mathbb{S}^{m},$ with the metric $\lan\cdot,\cdot\ran_{\overline{N}}=dt^2+G(t)^2d\om^2,$ the result follows from Remark \ref{remark-Rn}, p.\pageref{remark-Rn}.
\end{proof}

\begin{remark}\label{rem-diam}
 If $\overline{M}^{m+1}$ has constant sectional curvature, then, in the statement of Theorem \ref{Theo-Poincare}, we can choose the base point $x_0$ in order to minimize $R.$ In this case, we can replace $R$ by $(\diam\Omega)/2$ in the Poincar\'e formula \eqref{Poincare-ineq-geral}, assuming $\diam\Omega <2i(\overline{M})$, where $\diam\Omega$ and  $i(\overline{M})$ denote the extrinsic diameter of $\Omega$ and the injectivity radius of $\overline{M}^{m+1}$, respectively.
\end{remark}

\subsection{Space forms and the $r$-mean curvature}

For an oriented hypersurface $M$ of $\overline{M}^{m+1}$, we recall that the eigenvalues $\lambda_1,\lambda_2,\ldots,\lambda_m$ of $A$ are called principal curvatures. The symmetric functions associated to the immersion are given by
\begin{equation}\label{def-Sr}
 S_r = \sum_{i_1<\ldots<i_r}\lambda_{i_1}\cdots\lambda_{i_r},   
\end{equation}

where $(i_1,\ldots,i_r)\in\{1,2,\ldots,m\}^r.$ The $r$-mean curvature of $M$ is defined by
\begin{equation}\label{r-mean}
H_r=\frac{1}{\binom{m}{r}}S_r.    
\end{equation}
When $r=1,$ we have $H_1=H=\frac{1}{m}\tr A,$ the mean curvature of $M$. For $r=2$ and $\overline{M}=\R^{m+1},$ $H_2=\frac{1}{m(m-1)}{\rm Scal},$ where ${\rm Scal}$ is the non-normalized scalar curvature of $M,$ and for $r=m,$ we have that $H_m=\det A$ is the Gauss-Kronecker curvature of $M$. 

We recall that a hypersurface $M$ of $\overline{M}^{m+1}$ is called $r$-minimal if $H_r$ vanishes on $M$. Properties of hypersurfaces involving the $r$-mean curvature, including the case of $r$-minimal hypersurfaces, have been object of research by many authors as, for example, \cite{ENS2011}, \cite{IMR:2011}, \cite{LLS:2011}, \cite{ADR:2013}, \cite{AIR:2013}, \cite{MIR:2015}, \cite{BPR:2018}, and \cite{BPR:2019}.

Associated to the family of higher-order mean curvatures we have the Newton transformations $P_r:TM\to TM$, $r\in\{0,\ldots,m\},$ which are defined recursively as
\[
P_0=I, \ P_r=S_rI-AP_{r-1},
\]
where $I:TM\to TM$ is the identity operator. Clearly $P_r$ is a self-adjoint operator and $AP_r=P_rA.$ This operator has nice properties related with the symmetric functions $S_r.$  We first point out the following properties:

\begin{lemma}\label{properties} For each $0\leq r\leq m-1$ it holds:
\begin{enumerate}
\item $\tr P_r = (m-r)S_r;$
\item $\tr AP_r = (r+1)S_{r+1};$
\item $\tr A^2P_r = S_1S_{r+1} - (r+2)S_{r+2}.$
\end{enumerate}
\end{lemma}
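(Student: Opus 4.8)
The plan is to prove Lemma \ref{properties} by a purely algebraic manipulation of the recursive definition $P_r = S_r I - A P_{r-1}$, using induction on $r$, together with the standard Newton-type identities for the elementary symmetric functions $S_r$ of the eigenvalues $\lambda_1,\dots,\lambda_m$ of $A$.

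\smallskip

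First I would establish (1). Take the trace of $P_r = S_r I - A P_{r-1}$ to get $\tr P_r = m S_r - \tr(A P_{r-1})$. If (2) is already known for the index $r-1$, namely $\tr(A P_{r-1}) = r S_r$, then $\tr P_r = m S_r - r S_r = (m-r) S_r$, which is (1). The base case $r=0$ is $\tr P_0 = \tr I = m = m S_0$, true since $S_0 = 1$. So (1) follows once (2) is in place, and the two statements can be proved simultaneously by induction.

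\smallskip

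Next, for (2): multiply the recursion by $A$ to obtain $A P_r = S_r A - A^2 P_{r-1}$, and take traces: $\tr(A P_r) = S_r \tr A - \tr(A^2 P_{r-1}) = S_1 S_r - \tr(A^2 P_{r-1})$. Here is where the genuine input enters — one needs the identity $\tr(A^2 P_{r-1}) = S_1 S_r - (r+1) S_{r+1}$, which is exactly statement (3) with index $r-1$. Granting that, $\tr(A P_r) = S_1 S_r - (S_1 S_r - (r+1) S_{r+1}) = (r+1) S_{r+1}$, which is (2). The base case $r=0$ reads $\tr(A P_0) = \tr A = S_1 = 1\cdot S_1$, which is fine. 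Finally, (3) itself: from $A^2 P_r = S_r A^2 - A^3 P_{r-1}$ one would in principle need a fourth-power identity, so instead I would prove (3) directly from the known Newton identity relating $\tr(A^k)$ (the power sums $p_k$) to the $S_j$, or—more in the spirit of the recursion—deduce (3) by combining (1) and (2): indeed $P_r = S_r I - A P_{r-1}$ gives $\tr(A^2 P_r) = S_r \tr(A^2) - \tr(A^3 P_{r-1})$, but the cleaner route is to use $\tr(A^2 P_{r-1})$ in terms of the known expansions $\tr P_{r} = (m-r)S_r$ together with the Newton relation $p_2 = S_1^2 - 2 S_2$ and the general recursion $p_k = S_1 p_{k-1} - S_2 p_{k-2} + \cdots$. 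Concretely, one expands $\tr(A^2 P_r) = \sum_i \lambda_i^2 (P_r)_{ii}$ and uses the eigenvalue formula $(P_r)_{ii} = S_r(\widehat{\lambda_i})$, the $r$-th symmetric function omitting $\lambda_i$, together with the splitting $S_{r+1} = S_{r+1}(\widehat{\lambda_i}) + \lambda_i S_r(\widehat{\lambda_i})$ and $S_{r+2} = S_{r+2}(\widehat{\lambda_i}) + \lambda_i S_{r+1}(\widehat{\lambda_i})$ to collect $\sum_i \lambda_i^2 S_r(\widehat{\lambda_i}) = S_1 S_{r+1} - (r+2) S_{r+2}$.

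\smallskip

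The main obstacle is item (3): unlike (1) and (2), it is not a one-line consequence of a single lower-index statement in the same list, so the induction does not close on (3) alone. The cleanest fix is to prove (3) by the eigenvalue/"hat" computation described above (valid independently of $r$), and then run a simultaneous induction on (1) and (2) that uses (3) as already-established input; alternatively, one can set $p_1 = S_1$, $p_2 = S_1^2 - 2S_2$, and prove $\tr(A^2 P_r) = p_2 \tr P_r /(m-r)\cdot(\text{correction})$—but this is messier than the direct symmetric-function identity. I would therefore present (3) first via the hat-expansion, then deduce (2) and (1) by the short trace computations above, citing the classical fact $S_{k}(\lambda_1,\dots,\lambda_m) = S_k(\widehat{\lambda_i}) + \lambda_i S_{k-1}(\widehat{\lambda_i})$ as the only nontrivial ingredient.
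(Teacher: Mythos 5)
Your argument is correct, and more importantly you correctly diagnosed where the naive induction stalls: taking traces of $P_r = S_r I - A P_{r-1}$ and of $A P_r = S_r A - A^2 P_{r-1}$ only reduces (1) to (2) at the previous index and (2) to (3) at the previous index, so the chain never closes without an independent proof of (3). Your fix — prove (3) directly by the eigenvalue (``hat'') computation and then read off (2), then (1), from the traced recursions — is the right one and yields a valid self-contained proof. The one step you glide over in (3) should be made explicit: after applying $\lambda_i S_k(\widehat{\lambda_i}) = S_{k+1} - S_{k+1}(\widehat{\lambda_i})$ twice you reach $\sum_i \lambda_i^2 S_r(\widehat{\lambda_i}) = S_1 S_{r+1} - m S_{r+2} + \sum_i S_{r+2}(\widehat{\lambda_i})$, and you still need the counting identity $\sum_i S_{r+2}(\widehat{\lambda_i}) = (m-r-2)\, S_{r+2}$ (each $(r+2)$-fold product of eigenvalues omits $m-(r+2)$ of the indices $i$) to arrive at $-(r+2)S_{r+2}$; this is in fact item (1) at index $r+2$ in disguise, proved directly by counting rather than by the recursion. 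For comparison, the paper gives no proof at all — it simply points to Reilly \cite{R} and Barbosa--Colares \cite{BC}. Those sources establish the identities by essentially the same symmetric-function manipulations you propose, so your route is not different in kind from the classical one; the added value of your write-up is that it is actually carried out, and that it identifies clearly which of the three identities must be proved from scratch and which are formal consequences of the Newton recursion.
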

\begin{proof}See \cite{R} and \cite{BC}. \end{proof}

\begin{definition}
Let $\Q^{m+1}_c$ be a $(m+1)$-dimensional, simply-connected, complete Riemannian manifold with constant sectional curvature $c$. If $c>0$ consider $\Q_c^{m+1}=\s_+^{m+1}(c)$ be the open upper hemisphere. We call these manifolds space forms.
\end{definition}

Before stating the consequences of Theorem \ref{Theo-Poincare}, we show sufficient conditions for the divergence of $P_r$ to vanish. Such result is well-known in literature, see \cite{R} and \cite{R:93}. 

\begin{lemma}\label{divpr:01} The divergence of the Newton transformations $P_r$ vanishes, if the ambient manifold  $\overline{M}$ is a space form.
\end{lemma}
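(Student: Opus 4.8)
The plan is to compute the divergence $(\di P_r)(Y)$ directly by induction on $r$, using the recursive definition $P_r = S_r I - A P_{r-1}$ together with the Codazzi equation, which in a space form says that $(\n_X A)(Y) = (\n_Y A)(X)$ for all $X,Y \in TM$ (since the ambient curvature tensor is a multiple of the metric, its normal component entering Codazzi vanishes). The base case $r=0$ is trivial, as $P_0 = I$ is parallel. For the inductive step, I would write
\[
(\di P_r)(Y) = (\di(S_r I))(Y) - (\di(AP_{r-1}))(Y) = \n_Y S_r - (\di(AP_{r-1}))(Y),
\]
and then expand $(\di(AP_{r-1}))(Y) = \tr(E \mapsto (\n_E A)(P_{r-1}(Y))) + A((\di P_{r-1})(Y))$ after moving the self-adjoint operators around; by the inductive hypothesis $(\di P_{r-1}) = 0$, so only the first term survives.

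The heart of the argument is then the identity $\tr(E \mapsto (\n_E A)(P_{r-1}(Y))) = \n_Y S_r$, which reduces, via Codazzi, to a combinatorial fact about Newton transformations. Concretely, using Codazzi one rewrites $\sum_i \lan (\n_{e_i} A)(P_{r-1}(Y)), e_i\ran = \sum_i \lan (\n_{P_{r-1}(Y)} A)(e_i), e_i\ran = \lan \n_{P_{r-1}(Y)}(\tr A), \cdot\ran$-type expressions, but more carefully one needs the well-known formula relating $\tr(P_{r-1} \n_Y A)$ to $\n_Y S_r$. I would invoke the standard relation $\n_Y S_r = \tr(P_{r-1} \circ \n_Y A)$ (derivative of the $r$-th symmetric function of $A$), which follows from differentiating $S_r = \frac{1}{r}\tr(A P_{r-1})$ and the algebraic identities in Lemma \ref{properties}, or can be proved by a short induction parallel to the one for $P_r$ itself.

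I would organize the proof as: (1) state the Codazzi equation in a space form; (2) record the commutation $\di(AP_{r-1})(Y) = \tr(E \mapsto (\n_E A)(P_{r-1}Y)) + A((\di P_{r-1})(Y))$, using that $A$ and $P_{r-1}$ are self-adjoint and $AP_{r-1}=P_{r-1}A$; (3) apply Codazzi to turn $\tr(E \mapsto (\n_E A)(P_{r-1}Y))$ into $\tr(E \mapsto (\n_{P_{r-1}Y} A)(E)) = \tr((\n_{P_{r-1}Y} A)\circ(\,\cdot\,))$, and identify this with $\n_Y S_r$ via the symmetric-function derivative formula; (4) close the induction. The main obstacle I anticipate is step (3): one must be careful that the trace-with-$P_{r-1}$ expression obtained after Codazzi really equals $\n_Y S_r$ and not some other contraction — this requires either citing the standard identity $\n_Y S_{r} = \tr(P_{r-1}\,\n_Y A)$ or proving it, and keeping track of which index the Newton transformation acts on. Everything else is bookkeeping with self-adjoint operators and the product rule for the connection.
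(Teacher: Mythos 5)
Your overall strategy --- induction on $r$ via the recursion $P_r = S_rI - AP_{r-1}$, Codazzi in a space form, and the identity $dS_r=\tr(P_{r-1}\nabla_\star A)$ --- is the same as the paper's. But the execution contains a genuine error in the place you yourself flagged as the ``main obstacle,'' and it is not a merely cosmetic one.

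The problem is the expansion $(\di(AP_{r-1}))(Y)=\tr\bigl(E\mapsto(\n_E A)(P_{r-1}Y)\bigr)+A\bigl((\di P_{r-1})(Y)\bigr)$. Leaving aside that $(\di P_{r-1})(Y)$ is a scalar (so one cannot apply $A$ to it; presumably you mean $\lan A(\di P_{r-1}),Y\ran$ with $\di P_{r-1}$ the dual vector field), the first trace is not the one that shows up. Write out $(\di(AP_{r-1}))(Y)=\sum_i\lan(\n_{e_i}(AP_{r-1}))(Y),e_i\ran$ and expand: you get $\sum_i\lan(\n_{e_i}A)(P_{r-1}Y),e_i\ran+\sum_i\lan A\bigl((\n_{e_i}P_{r-1})(Y)\bigr),e_i\ran$. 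Using self-adjointness of $\n_{e_i}A$ and then Codazzi, the first sum equals $\lan\n S_1,P_{r-1}Y\ran=(P_{r-1}Y)(S_1)$, \emph{not} $Y(S_r)=\tr(P_{r-1}\n_Y A)$. The two quantities differ already for $r=2$. Similarly the second sum equals $\sum_i\lan(\n_{e_i}P_{r-1})(Y),Ae_i\ran$, which is \emph{not} $\lan A(\di P_{r-1}),Y\ran=\sum_i\lan(\n_{e_i}P_{r-1})(e_i),AY\ran$ unless $\n_{e_i}P_{r-1}$ happens to commute with $A$; in particular it need not vanish when $\di P_{r-1}=0$. What is true is that the \emph{total} $(\di(AP_{r-1}))(Y)$ equals $Y(S_r)+\lan A(\di P_{r-1}),Y\ran$, so the mismatches cancel, but the inductive step you wrote down does not see this.

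The fix (which is what the paper does) is to use the self-adjointness of $\n_{E}P_r$ \emph{first}, so that $\di P_r$ is treated as a vector field: $\di P_r=\sum_i(\n_{e_i}P_r)(e_i)$. Then both slots are $e_i$ and the recursion gives
\[
\di P_r=\n S_r-\sum_i(\n_{e_i}A)(P_{r-1}e_i)-A(\di P_{r-1}),
\]
in which the middle sum, paired with $Y$, becomes $\sum_i\lan(\n_{e_i}A)(P_{r-1}e_i),Y\ran
=\sum_i\lan(\n_{e_i}A)(Y),P_{r-1}e_i\ran
=\sum_i\lan(\n_{Y}A)(e_i),P_{r-1}e_i\ran
=\tr(P_{r-1}\n_Y A)=Y(S_r)$ by self-adjointness of $\n_{e_i}A$, Codazzi (with the curvature term dropping out because the ambient curvature endomorphism $\overline{R}(N,P_{r-1}e_i)e_i$ is proportional to $N$ in a space form), and the derivative formula for $S_r$. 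Then $\di P_r=-A(\di P_{r-1})$ and the induction closes. In short: expand \emph{after} you have evaluated both arguments at the frame, not before; the trace $\tr\bigl(E\mapsto(\n_E A)(P_{r-1}Y)\bigr)$ that you wrote is simply the wrong contraction.
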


In order to state the next Poincar\'e type inequality, we need to define the special functions
\begin{equation}\label{Sc}
\mathcal{S}_c(t)=
\begin{cases}
t,&\mbox{if}\ c=0;\\
\frac{1}{\sqrt{-c}}\sinh(\sqrt{-c}t),&\mbox{if}\ c<0;\\
\frac{1}{\sqrt{c}}\sin(\sqrt{c}t), &\mbox{if} \ c>0.
\end{cases}    
\end{equation}

For space forms and Newton transformations we have the following result:
\begin{theorem}\label{Sr-Rn}
If $M$ is a hypersurface of $\Q_c^{m+1}$ and $\Omega\subset M,$ $\overline{\Omega}\cap \partial M=\emptyset,$ is a connected and open domain with compact closure, then, for every class $\mathcal{C}^{1}$ functions $u,f\colon M\ria\R,$ with $u$ nonnegative and compactly supported in $\Omega,$ we have
\begin{equation}\label{Poincare-ineq-Sr}
\begin{aligned}
\int_\Omega u S_r \mathcal{S}_c'(\rho) e^{-f}d\mu &\leq C_0\int_\Omega \left[|P_r(\n u - u\n f)| +  (r+1)|S_{r+1}|u \right]e^{-f}d\mu,\\
\end{aligned}
\end{equation} 
for $C_0=\frac{1}{(m-r)}\mathcal{S}_c\left(\frac{\diam\Omega}{2}\right)$. In particular, if $P_r:TM\to TM$ is nonnegative definite, then
\begin{equation}\label{Poincare-ineq-Sr-2}
\int_\Omega u H_r  \mathcal{S}_c'(\rho) e^{-f}d\mu \leq C_1\int_\Omega \left[|\n u - u\n f|H_r +  |H_{r+1}|u \right]e^{-f}d\mu,
\end{equation}
for $C_1=(m-r)C_0$. Moreover, the equality holds if $M$ is a geodesic sphere, $\Omega=M,$ and $f,u$ are constant functions.

Here, $\rho:M\to\R_+$ is the distance function of $\Q_c^{m+1},$ restricted to $M,$ from a base point $x_0\in\Q_c^{m+1}$ chosen to minimize the radius of the extrinsic ball $B_R(x_0)\supseteq\Omega,$ (see Remark \ref{rem-diam}), $S_r$ is the $r$-th symmetric function of the eigenvalues of $M,$ $H_r=\binom{m}{r}^{-1}S_r$ is its $r$-mean curvature, and $\diam\Omega$ denotes the extrinsic diameter of $\Omega.$
\end{theorem}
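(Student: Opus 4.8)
The plan is to obtain Theorem~\ref{Sr-Rn} as a direct specialization of Theorem~\ref{Theo-Poincare} with $T=P_r$, the $r$-th Newton transformation, and $G=\mathcal{S}_c$. First I would record the elementary properties of $\mathcal{S}_c$: it is of class $\mathcal{C}^2$, positive for $t\neq 0$, nondecreasing on $[0,b)$ with $b=+\infty$ if $c\leq 0$ and $b=\pi/(2\sqrt{c})$ if $c>0$, and a one-line computation gives $-\mathcal{S}_c''(t)/\mathcal{S}_c(t)\equiv c$. Since $\Q_c^{m+1}$ has constant sectional curvature $c$, hypothesis \eqref{sectional} holds (with equality) for this $G$. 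The crucial structural point is that $\Q_c^{m+1}$ is isometric to the warped product $[0,b)\times\s^m$ with metric $dt^2+\mathcal{S}_c(t)^2 d\om^2$ (for $c>0$ this realizes the open upper hemisphere $\s_+^{m+1}(c)$); hence the ``moreover'' clause of Theorem~\ref{Theo-Poincare} is in force, so $P_r$ need not be assumed non-negative for \eqref{Poincare-ineq-Sr}. Finally, for $\overline{\Omega}$ compact in $\Q_c^{m+1}$ the extrinsic diameter is finite---and strictly less than $\pi/\sqrt{c}$ when $c>0$, since the open hemisphere contains no pair of antipodal points---so $\mathcal{S}_c(\diam\Omega/2)$ is well defined and the remaining hypotheses of Theorem~\ref{Theo-Poincare} are met.

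With this in place I would apply Theorem~\ref{Theo-Poincare} to $T=P_r$ and evaluate the three trace quantities in \eqref{Poincare-ineq-geral} using Lemma~\ref{properties} and Lemma~\ref{divpr:01}. By Lemma~\ref{properties}(1), $\tr P_r=(m-r)S_r$, which turns the left-hand side into $(m-r)\int_\Omega uS_r\mathcal{S}_c'(\rho)e^{-f}d\mu$. By Lemma~\ref{properties}(2), $\tr(AP_r)=(r+1)S_{r+1}$. By Lemma~\ref{divpr:01}, the ambient space being a space form forces $\di P_r=0$, so $(\di P_r)(\n\rho)=0$ and the term $||\tr(AP_r)|-(\di P_r)(\n\rho)|$ collapses to $(r+1)|S_{r+1}|$. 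Substituting and dividing by $(m-r)$ yields \eqref{Poincare-ineq-Sr} with $C_0=\tfrac{1}{m-r}\mathcal{S}_c(\diam\Omega/2)$.

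To pass to \eqref{Poincare-ineq-Sr-2}, assume $P_r$ is non-negative. Then its operator norm is at most its trace, so $|P_r(\n u-u\n f)|\leq(\tr P_r)\,|\n u-u\n f|=(m-r)S_r\,|\n u-u\n f|$ (and in particular $S_r\geq 0$). Writing $S_r=\binom{m}{r}H_r$ and $S_{r+1}=\binom{m}{r+1}H_{r+1}$ and using the identity $(r+1)\binom{m}{r+1}=(m-r)\binom{m}{r}$, inequality \eqref{Poincare-ineq-Sr} becomes, after cancelling the common factor $\binom{m}{r}$, exactly \eqref{Poincare-ineq-Sr-2} with $C_1=(m-r)C_0=\mathcal{S}_c(\diam\Omega/2)$.

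It remains to verify the equality case. I would take $M=\partial B_{\rho_0}(x_0)\subset\Q_c^{m+1}$ oriented by the \emph{inward} unit normal $\eta=-\overline{\n}\rho$, $\Omega=M$, and $u,f$ constant. On such $M$: $\n u=\n f=0$; since $\overline{\n}\rho$ is normal, $\n\rho=0$ and the tangential field $X^\top$ vanishes, whence $\di_f(uP_r(X^\top))=0$ and the divergence step of Theorem~\ref{Theo-Poincare} loses nothing; moreover $\rho\equiv\rho_0=\diam\Omega/2$, so the Cauchy--Schwarz estimate and the monotonicity bound $G(\rho)\leq G(\diam\Omega/2)$ are equalities; and $M$ is totally umbilic with principal curvatures $\mathcal{S}_c'(\rho_0)/\mathcal{S}_c(\rho_0)>0$, so $S_{r+1}>0$ and $\langle-\overline{\n}\rho,\eta\rangle\tr(AP_r)=(r+1)S_{r+1}=(r+1)|S_{r+1}|$, which makes the bound on the curvature term tight as well. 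Tracing these equalities through the proof, both sides of \eqref{Poincare-ineq-Sr}---hence of \eqref{Poincare-ineq-Sr-2}---reduce to the same quantity, the identity $(m-r)S_r\,\mathcal{S}_c'(\rho_0)=(r+1)S_{r+1}\,\mathcal{S}_c(\rho_0)$ being nothing but the binomial identity above evaluated on the umbilic sphere. The only genuinely delicate point in the argument is this last one: one must choose the correct orientation of the sphere (the inward normal, so that $S_r\geq 0$) and check that \emph{every} inequality invoked inside Theorem~\ref{Theo-Poincare}---the Hessian comparison behind Lemma~\ref{lemma-main1}, the Cauchy--Schwarz inequality, and the monotonicity of $G$---degenerates simultaneously.
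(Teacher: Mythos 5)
Your proposal is correct and follows essentially the same route as the paper: specialize Theorem~\ref{Theo-Poincare} with $T=P_r$ and $G=\mathcal{S}_c$, invoke Lemma~\ref{properties} for $\tr P_r$ and $\tr(AP_r)$, and Lemma~\ref{divpr:01} for $\di P_r=0$, then use the binomial identity $(r+1)\binom{m}{r+1}=(m-r)\binom{m}{r}$ to pass to \eqref{Poincare-ineq-Sr-2}. You are somewhat more explicit than the paper in two places the paper leaves implicit---the appeal to the warped-product ``moreover'' clause to drop the non-negativity of $P_r$ in \eqref{Poincare-ineq-Sr}, and the verification that Cauchy--Schwarz, the monotonicity bound $G(\rho)\le G(\diam\Omega/2)$, and the Hessian comparison all degenerate simultaneously on the geodesic sphere---but the underlying argument is the same.
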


\begin{proof}
Indeed, in $\Q_c^{m+1}$ we have \eqref{sectional} for $G(t)=\mathcal{S}_c(t)$ and using Lemma \ref{divpr:01}, it holds $\di P_r=0$ in space forms. From the second item of Lemma \ref{properties}, we have $\tr(A  P_r)=(r+1)S_{r+1}$ and, by Theorem \ref{Theo-Poincare} and Remark \ref{rem-diam}, we obtain \eqref{Poincare-ineq-Sr}. Moreover, if $P_r$ is nonnegative definite, then\[|P_r(U)|\leq (\tr P_r)|U|=(m-r)S_r|U|,\] which, together with $\binom{m}{r+1}\left(\frac{r+1}{m-r}\right)\binom{m}{r}^{-1}=1,$ gives \eqref{Poincare-ineq-Sr-2}, as desired. In order to verify the equality, just notice that, in geodesic spheres of radius $R$, it holds
\[
\lambda_1=\cdots=\lambda_m=\frac{\mathcal{S}_c'(R)}{\mathcal{S}_c(R)}.
\]
The equality follows by direct substitution.
\end{proof}

\begin{remark}\label{Pr-positive}
There are some conditions to deduce that $P_r$ is nonnegative definite on a connected hypersurface.  We point out some of them below:
\begin{itemize}
\item[(a)] If $S_{r+1}=0,$ then $P_r$ is semi-definite. Thus, if $r$ is odd, then we can choose an orientation such that $P_r$ is nonnegative definite;
\item[(b)] If $S_{r+1}=0,$ $r$ is even, and $S_r\geq 0;$
\item[(c)] If $r$ is odd, $S_{r+1}=0,$ and $S_{r+2}\neq 0$, then we can choose an orientation such that $P_r$ is positive definite;
\item[(d)] If $r$ is even, $S_{r+1}=0,$ $S_{r+2}\neq 0,$ and $S_r\geq 0,$ then $P_r$ is positive definite;
\item[(e)] If $S_k>0$ for some $1\leq k\leq m-1$ and there exists a point where all the principal curvatures are nonnegative, then $P_r$ is positive definite for every $1\leq r\leq k-1.$
\end{itemize}
The proofs of these claims can be found in \cite{C}, Proposition 2.8., p.192, (for items (a) to (d)), \cite{CR}, Proposition 3.2, p.188, (for item (e)).
\end{remark}

In the following, we present some applications of the Poincar\'e inequalities of Theorem \ref{Sr-Rn}. Denote by $d\mu$ the $m$-dimensional Lebesgue measure of $M$ and by $dS_\mu$ the $(m-1)$-dimensional Lebesgue measure of the boundaries of the $m$-dimensional subsets of $M.$ We also denote the volume of a set $\Omega$ by $|\Omega|$ and by $\mathcal{B}_R\subset M$ the geodesic ball of $M$ with radius $R$ and center at a point $x_0\in M,$ and by $\partial\mathcal{B}_R$ its boundary, i.e., the geodesic sphere of radius $R$ and center at $x_0.$ We omit the center of ball in the notation since it will not be important in the statements of the results.
 
\begin{corollary}\label{Area-Sr-Rn}
Let $M$ be a hypersurface of $\R^{m+1}$ such that $H_{r+1}>0,$ $r=1,2,\ldots,m-1,$ and $\Omega\subset M,$ $\Omega\cap\partial M=\emptyset,$ be a connected and open domain with compact closure. If $M$ has a point whose all the principal curvatures are nonnegative, then
\begin{equation}\label{ineq-Sr}
\begin{aligned}
|\Omega|\leq \sum_{k=0}^r\left(\frac{\diam\Omega}{2}\right)^{k+1}\int_{\partial \Omega}H_k dS_\mu +\left(\frac{\diam\Omega}{2}\right)^{r+1}\int_\Omega H_{r+1}d\mu.
\end{aligned}
\end{equation}
Here, $H_r$ is the $r$-mean curvature of $M,$ defined by \eqref{r-mean}, p.\pageref{r-mean}, and $\diam\Omega$ is the extrinsic diameter of $\Omega.$
\end{corollary}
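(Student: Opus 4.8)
The plan is to apply the Poincar\'e inequality \eqref{Poincare-ineq-Sr-2} of Theorem \ref{Sr-Rn} successively for the indices $k=0,1,\dots,r$, in the Euclidean setting $c=0$ (so $\mathcal{S}_0(t)=t$, $\mathcal{S}_0'(t)\equiv1$, and the injectivity--radius restriction is vacuous), and then to telescope the resulting chain. Throughout put $\delta=\diam\Omega/2$ and, for $0\le k\le r$, set $I_k=\int_\Omega H_k\,d\mu$ and $J_k=\int_{\partial\Omega}H_k\,dS_\mu$; since $H_0\equiv1$ we have $I_0=|\Omega|$, which is the quantity to be estimated.

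\emph{Step 1: positivity of the Newton operators.} Since $H_{r+1}>0$ and $M$ has a point at which every principal curvature is non-negative, Remark \ref{Pr-positive}(e) (applied with $S_{r+1}>0$) shows that $P_1,\dots,P_r$ are positive definite, and $P_0=I$ trivially is; in the borderline case $r=m-1$ one uses instead that $\det A=S_m>0$ never vanishes, so the signature of $A$ is constant on the connected hypersurface $M$ and, being $(m,0)$ at the elliptic point, it is $(m,0)$ throughout, whence all principal curvatures and all $P_k$ are positive on $M$. Consequently $S_k=\frac{1}{m-k}\tr P_k\ge0$, so $H_k\ge0$ for $0\le k\le r$; combined with $H_{r+1}>0$ this lets us drop every absolute value appearing in \eqref{Poincare-ineq-Sr-2}.

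\emph{Step 2: one inequality per index.} Fix $k\in\{0,\dots,r\}$ and feed into \eqref{Poincare-ineq-Sr-2} the choices $c=0$, $f\equiv0$, and $u=u_\varepsilon$, where $u_\varepsilon$ is a non-negative $C^1$ function compactly supported in $\Omega$ obtained by mollifying $x\mapsto\min\{1,\varepsilon^{-1}\dist_M(x,\partial\Omega)\}$. Since $C_1=(m-k)C_0=(m-k)\cdot\frac{1}{m-k}\,\mathcal{S}_0(\delta)=\delta$ for every such $k$, this gives
\[
\int_\Omega u_\varepsilon\,H_k\,d\mu\ \le\ \delta\int_\Omega\bigl(|\nabla u_\varepsilon|\,H_k+H_{k+1}\,u_\varepsilon\bigr)\,d\mu.
\]
Now let $\varepsilon\to0$. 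Then $u_\varepsilon\nearrow1$ on $\Omega$, so by dominated convergence the left-hand side tends to $I_k$ and $\int_\Omega H_{k+1}u_\varepsilon\,d\mu\to I_{k+1}$; and since $|\nabla u_\varepsilon|=\varepsilon^{-1}\chi_{\{0<\dist_M(\cdot,\partial\Omega)<\varepsilon\}}$ a.e., the coarea formula yields $\int_\Omega|\nabla u_\varepsilon|H_k\,d\mu=\varepsilon^{-1}\int_0^\varepsilon\bigl(\int_{\{\dist_M(\cdot,\partial\Omega)=s\}}H_k\,dS\bigr)\,ds\to J_k$, where the mild regularity of $\partial\Omega$ implicit in the word ``domain'' is used. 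Hence
\[
I_k\ \le\ \delta\,(J_k+I_{k+1}),\qquad k=0,1,\dots,r.
\]

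\emph{Step 3: telescoping.} Substituting the bound for $I_{k+1}$ into that for $I_k$, starting from $k=0$,
\[
|\Omega|=I_0\le\delta J_0+\delta I_1\le\delta J_0+\delta^2J_1+\delta^2I_2\le\cdots\le\sum_{k=0}^{r}\delta^{k+1}J_k+\delta^{r+1}I_{r+1},
\]
which is exactly \eqref{ineq-Sr} once $\delta=\diam\Omega/2$, $J_k=\int_{\partial\Omega}H_k\,dS_\mu$ and $I_{r+1}=\int_\Omega H_{r+1}\,d\mu$ are unwound. The one genuinely delicate point is the limiting argument in Step 2 — replacing the compactly supported test functions allowed by Theorem \ref{Sr-Rn} with the characteristic function of $\Omega$ and identifying the limits of the gradient terms with the boundary integrals through the coarea formula; granted that, Step 1 (positivity of the $P_k$, hence of the relevant $H_k$) and Step 3 (the telescoping) are routine.
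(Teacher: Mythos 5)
Your proof follows essentially the same route as the paper's: take $f$ constant, plug the cutoff $u_\varepsilon$ into \eqref{Poincare-ineq-Sr-2} with $c=0$, let $\varepsilon\to 0$ (via the coarea formula) to get the one-step estimate $I_k\le\delta(J_k+I_{k+1})$, telescope from $k=0$ to $k=r$, and invoke Remark~\ref{Pr-positive}(e) for positivity of the Newton operators. Your explicit treatment of the borderline index $r=m-1$ — which Remark~\ref{Pr-positive}(e), stated for $S_k>0$ with $k\le m-1$, does not literally cover — and your check that $H_k>0$ for $k\le r$ so that the absolute values in \eqref{Poincare-ineq-Sr-2} may be dropped fill in details the paper's terse proof leaves implicit, but the underlying argument is identical.
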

\begin{proof}
Taking $f\equiv 1$ and $u=u_\ve$ in \eqref{Poincare-ineq-Sr-2}, where
\begin{equation}\label{u-eps}
u_\ve(x)=\begin{cases}
1,&\mbox{if} \ \dist(x,\partial\mathcal{B}_R)\geq \ve;\\
\dfrac{1}{\ve}\dist(x,\partial\mathcal{B}_R),& \mbox{if} \ \dist(x,\partial\mathcal{B}_R)< \ve,
\end{cases}
\end{equation}
and $\dist$ stands for the distance function on $M,$ we obtain letting $\epsilon\to 0$ and using the coarea formula,
\begin{equation}\label{vol-Hr}
\int_\Omega H_r d\mu \leq \frac{\diam\Omega}{2}\left[\int_{\partial\Omega} H_rdS_\mu +\int_{\Omega} H_{r+1}d\mu\right].    
\end{equation}
By applying successively \eqref{vol-Hr} and using Remark \ref{Pr-positive}, item (e), we obtain the result.
\end{proof}

\begin{remark}
In particular, for $r=0,$ we have
\begin{equation}\label{isoH}
|\Omega|\leq \left(\frac{\diam\Omega}{2}\right)\left[|\partial\Omega|+\int_{\Omega} H d\mu\right],
\end{equation}
and for $r=1,$ we obtain
\begin{equation}
|\Omega|\leq \left(\frac{\diam\Omega}{2}\right)|\partial\Omega|+\left(\frac{\diam\Omega}{2}\right)^2\left[\int_{\partial \Omega} H dS_\mu + \frac{1}{m(m-1)}\int_\Omega {\rm Scal}\ d\mu\right],
\end{equation}
where $H$ is the mean curvature and ${\rm Scal}$ is the (non-normalized) scalar curvature of $M$. 
\end{remark}
\begin{remark}
Isoperimetric inequalities in the spirit of \eqref{isoH} were obtained by the first and the third authors in \cite{AN18} for immersions in warped product manifolds. We can also compare the previous results with Theorem 2 in \cite{KM:2015}, which states that
\[
\int_M H_k\rho^{p}d\mu \leq \int_M H_r\rho^{p+r-k}d\mu
\]
for every closed hypersurface $M$ of $\R^{n+1}$ satisfying $H_r>0$ and for every $p>0$ and $0\leq k<r$ (compare also with the results of \cite{GR:2020}). Moreover, they prove that equality holds only for round spheres. On its turn, by the proof of our Poincar\'e type inquality \eqref{Poincare-ineq-Sr}, we obtain
\begin{equation}
\int_M H_{r-1}d\mu \leq \int_M\rho|H_r|d\mu
\end{equation}
for every closed hypersurface $M,$ by taking $u$ and $f$ constant functions, with equality holding for round spheres.
\end{remark}

For a weakly locally convex hypersurface (i.e., $M$ has nonnegative second fundamental form), $P_r$ is nonnegative definite for every $r=1,\ldots,m-1.$ Applying successively inequality \eqref{Poincare-ineq-Sr-2}, $m-1$ times, for $f\equiv 1,$ and $u=u_\ve$ we obtain, letting $\ve\to 0$ and using the coarea formula:

\begin{corollary}\label{Area-Sn}
If $M$ is a weakly locally convex hypersurface of $\R^{m+1}$, then the volume of any geodesic ball $\mathcal{B}_R$ of $M$ with radius $R$ satisfies
\begin{equation}\label{ineq-Sn}
\frac{|\mathcal{B}_R|}{R^m}\leq \left[\dfrac{(R\max_{\partial \mathcal{B}_R}|A|)^m-1}{ (R\max_{\partial \mathcal{B}_R}|A|)-1}\right]\frac{|\partial\mathcal{B}_R|}{R^{m-1}} +\int_{\mathcal{B}_R} H_m d\mu,
\end{equation}
where $H_m$ is the Gauss-Kronecker curvature of $M$ and $|A|$ is the matrix norm its second fundamental form. In particular, if there exists $\alpha>0$ such that $\max_{\partial \mathcal{B}_R}|A|\leq \alpha/R,$ then
\begin{equation}\label{ineq-Sn-2}
\frac{|\mathcal{B}_R|}{R^m}\leq C(m,\alpha)\frac{|\partial\mathcal{B}_R|}{R^{m-1}} +\int_{\mathcal{B}_R} H_m d\mu,
\end{equation}
where $C(m,\alpha)=\frac{\alpha^m-1}{\alpha-1}.$ Moreover, if $0<\alpha<1,$ then
\begin{equation}\label{ineq-Sn-3}
\frac{|\mathcal{B}_R|}{R^m}\leq \frac{1}{1-\alpha}\frac{|\partial\mathcal{B}_R|}{R^{m-1}}.
\end{equation}
\end{corollary}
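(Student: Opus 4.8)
The plan is to specialize the Poincaré inequality \eqref{Poincare-ineq-Sr-2} to the flat ambient $\R^{m+1}$, where $\mathcal{S}_c(\rho)=\rho$ and $\mathcal{S}_c'(\rho)\equiv 1$, and iterate it across $r=1,2,\ldots,m-1$ exactly as in the proof of Corollary \ref{Area-Sr-Rn}. First I would note that weak local convexity ($A\geq 0$) makes every Newton transformation $P_r$ non negative definite for $0\le r\le m-1$ (this is the hypothesis under which \eqref{Poincare-ineq-Sr-2} is valid), so the inequality applies at every stage. Taking $f\equiv 1$ (so $\n f=0$) and $u=u_\ve$ as in \eqref{u-eps} on the intrinsic ball $\Omega=\mathcal{B}_R$, and letting $\ve\to 0$, the gradient term $\int |\n u_\ve|H_r$ concentrates on $\partial\mathcal{B}_R$ and produces $\int_{\partial\mathcal{B}_R}H_r\,dS_\mu$, yielding the one-step estimate
\[
\int_{\mathcal{B}_R}H_r\,d\mu \le R\left[\int_{\partial\mathcal{B}_R}H_r\,dS_\mu + \int_{\mathcal{B}_R}H_{r+1}\,d\mu\right],
\]
the analogue of \eqref{vol-Hr} with $\diam\Omega/2$ replaced by $R$ (here one uses $\rho\le R$ on $\mathcal{B}_R$ since a geodesic ball of $M$ of radius $R$ has extrinsic radius at most $R$, so the constant $\mathcal{S}_c(\diam\Omega/2)$ can be taken to be $R$).

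Next I would run the recursion. Starting from $r=0$ (so $H_0\equiv 1$, giving $|\mathcal{B}_R|$ on the left) and substituting the $r=1$ estimate into it, then the $r=2$ estimate, and so on up to $r=m-1$, each boundary term $\int_{\partial\mathcal{B}_R}H_k\,dS_\mu$ picks up a factor $R^{k+1}$ and the final bulk term is $R^m\int_{\mathcal{B}_R}H_m\,d\mu$. The key quantitative refinement, distinguishing this from Corollary \ref{Area-Sr-Rn}, is to bound each boundary integrand $H_k$ on $\partial\mathcal{B}_R$ using the convexity: since $0\le\lambda_i$, one has $0\le H_k = \binom{m}{k}^{-1}S_k \le \binom{m}{k}^{-1}\binom{m}{k}(\max_i\lambda_i)^k \le |A|^k$ pointwise, hence on $\partial\mathcal{B}_R$ one may replace $\int_{\partial\mathcal{B}_R}H_k\,dS_\mu$ by $(\max_{\partial\mathcal{B}_R}|A|)^k\,|\partial\mathcal{B}_R|$. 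Summing the resulting geometric series $\sum_{k=0}^{m-1}(R\max_{\partial\mathcal{B}_R}|A|)^k = \frac{(R\max_{\partial\mathcal{B}_R}|A|)^m-1}{(R\max_{\partial\mathcal{B}_R}|A|)-1}$ and dividing through by $R^m$ gives \eqref{ineq-Sn}.

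Finally, the two ``in particular'' clauses are immediate: under $\max_{\partial\mathcal{B}_R}|A|\le\alpha/R$ the bracketed factor is at most $\frac{\alpha^m-1}{\alpha-1}=C(m,\alpha)$, giving \eqref{ineq-Sn-2}; and when $0<\alpha<1$ one instead sums the series to infinity, $\sum_{k\ge0}\alpha^k=\frac{1}{1-\alpha}$, while the term $R^m\int_{\mathcal{B}_R}H_m\,d\mu$ is absorbed because, by the same convexity bound, $H_m=\det A\le|A|^m\le(\alpha/R)^m$ makes that term comparable to a further geometric tail already counted, yielding \eqref{ineq-Sn-3}. The main obstacle I anticipate is the careful bookkeeping in the iteration — tracking exactly which power of $R$ and which surface-versus-bulk integral each term carries after $m-1$ substitutions, and justifying the $\ve\to0$ limit uniformly (standard, since $H_r$ is continuous and $\mathcal{B}_R$ has compact closure, so each $\int_{\mathcal{B}_R}|\n u_\ve|H_r\to\int_{\partial\mathcal{B}_R}H_r\,dS_\mu$ by the coarea formula). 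I do not expect any genuinely delicate analytic point beyond this combinatorial accounting.
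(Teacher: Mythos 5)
Your plan is essentially the paper's proof: specialize \eqref{Poincare-ineq-Sr-2} to $\R^{m+1}$ (where $\mathcal{S}_0(\rho)=\rho$ and $\mathcal{S}_0'\equiv 1$), use weak local convexity ($A\ge0$) to guarantee $P_r\ge0$ for every $r$, iterate the one-step estimate \eqref{vol-Hr} on $\Omega=\mathcal{B}_R$ across $r=0,\dots,m-1$ with $\diam\mathcal{B}_R/2\le R$, bound each boundary integral via $H_k\le|A|^k$ by $(\max_{\partial\mathcal{B}_R}|A|)^k|\partial\mathcal{B}_R|$, and sum the finite geometric series to get \eqref{ineq-Sn}; \eqref{ineq-Sn-2} then follows immediately. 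This is exactly what the paper does (it simply invokes the already-iterated form \eqref{ineq-Sr} of Corollary \ref{Area-Sr-Rn}, while you re-run the iteration, but the content is the same).

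The one place your description would not survive being made rigorous is \eqref{ineq-Sn-3}. You cannot literally ``sum the series to infinity'': the iteration necessarily terminates at $r=m-1$ (there is no step for $P_m$, as $P_m\equiv 0$ by Cayley--Hamilton), and the residual bulk term $\int_{\mathcal{B}_R}H_m\,d\mu$ is not another term of the same boundary series. The correct mechanism --- and the one the paper uses --- is an algebraic absorption: starting from \eqref{ineq-Sn-2}, bound $H_m\le|A|^m\le(\alpha/R)^m$ on $\mathcal{B}_R$ to get $\int_{\mathcal{B}_R}H_m\,d\mu\le\alpha^m|\mathcal{B}_R|/R^m$, move this term to the left (legitimate since $0<\alpha<1$ gives $1-\alpha^m>0$), and simplify $\dfrac{1}{1-\alpha^m}\cdot\dfrac{1-\alpha^m}{1-\alpha}=\dfrac{1}{1-\alpha}$. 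This is why the final constant happens to equal the sum of the infinite geometric series, but that is a coincidence of the algebra, not the argument. You already have both key ingredients (the pointwise bound on $H_m$ and the word ``absorbed''), so only the explanation needs tightening; note also that, like the paper, you are silently using the bound $|A|\le\alpha/R$ on all of $\mathcal{B}_R$ rather than merely on $\partial\mathcal{B}_R$ when you estimate the bulk $H_m$ term.
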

\begin{proof}
Since $\lambda_i\leq|\lambda_i|\leq|A|$ we have $H_r\leq|A|^r.$ Applying \eqref{ineq-Sr} to $\Omega=\mathcal{B}_R$ and $k=m-1,$ we obtain
\[
\begin{aligned}
|\mathcal{B}_R|&\leq \sum_{r=0}^{m-1}R^{r+1}\int_{\partial\mathcal{B}_R}H_rdS_\mu + R^m\int_{\mathcal{B}_R}H_md\mu\\
&\leq\sum_{r=0}^{m-1}R^{r+1}\max_{\partial\mathcal{B}_R}|A|^r|\partial\mathcal{B}_R| + R^m\int_{\mathcal{B}_R}H_md\mu.
\end{aligned}
\]
This implies
\[
\frac{|\mathcal{B}_R|}{R^m}\leq \left[\sum_{r=0}^{m-1}(R\max_{\partial\mathcal{B}_R}|A|)^r\right]\frac{|\partial\mathcal{B}_R|}{R^{m-1}} + \int_{\mathcal{B}_R}H_md\mu,
\]
which gives \eqref{ineq-Sn}. Inequality \eqref{ineq-Sn-2} is an immediate consequence of \eqref{ineq-Sn} and the hypothesis $\max_{\partial \mathcal{B}_R}|A|\leq \alpha/R.$ To conclude \eqref{ineq-Sn-3}, just observe that $H_m\leq |A|^m\leq \alpha^m/R^m,$ which implies
\[
\frac{|\mathcal{B}_R|}{R^m}\leq \frac{\alpha^m-1}{\alpha-1}\frac{\partial\mathcal{B}_R}{R^{m-1}}+\frac{\alpha^m}{R^m}|\mathcal{B}_R|,
\]
which gives the result.
\end{proof}

\begin{remark}\label{rem-Pr-Ar}
In fact, Corollary \ref{Area-Sn} holds for any hypersurface without any assumption of convexity, by applying successively inequality \eqref{Poincare-ineq-Sr}. In this case, \eqref{ineq-Sn} becomes
\begin{equation}\label{ineq-Sn-4}
\frac{|\mathcal{B}_R|}{R^m}\leq \mathcal{C}(m)\left[\dfrac{(R\max_{\partial \mathcal{B}_R}|A|)^m-1}{ (R\max_{\partial \mathcal{B}_R}|A|)-1}\right]\frac{|\partial\mathcal{B}_R|}{R^{m-1}} +\int_{\mathcal{B}_R} |H_m|d\mu,
\end{equation}
where $\mathcal{C}(m)$ is a constant, depending only on $m.$ This constant exists and it holds $\mathcal{C}(m)\leq \frac{2^m-1}{m}.$ In fact, since
\[
P_r = \sum_{k=0}^r(-1)^k S_{r-k}A^k,
\]
and $|S_k|\leq \binom{m}{k}|A|^k,$ we obtain
\[
\begin{aligned}
|P_r|&\leq \sum_{k=0}^r |S_{r-k}||A|^k \leq \left[\sum_{k=0}^r \binom{m}{r-k}\right]|A|^r\\
&=\left[\sum_{k=0}^r \binom{m}{k}\right]|A|^r\leq \left[\sum_{k=0}^{m-1} \binom{m}{k}\right]|A|^r\\
&=(2^{m}-1)|A|^r.
\end{aligned}
\]
By \eqref{Poincare-ineq-Sr} and reasoning as in the proof of Corollary \ref{Area-Sr-Rn}, we obtain
\begin{equation}
|\Omega|\leq \sum_{r=0}^{m-1}\left(\frac{\diam\Omega}{2}\right)^{r+1}\int_{\partial\Omega}\left[\frac{|P_r|}{\binom{m}{r}(m-r)}\right]dS_\mu +\left(\frac{\diam\Omega}{2}\right)^{m}\int_\Omega|H_m|d\mu,
\end{equation}
which gives, for $\Omega=\mathcal{B}_R,$
\[
\begin{aligned}
\frac{|\mathcal{B}_R|}{R^m}&\leq \left[\sum_{r=0}^{m-1}\frac{2^m-1}{\binom{m}{r}(m-r)}(R\max_{\partial\mathcal{B}_R}|A|)^r\right]\frac{|\partial\mathcal{B}_R|}{R^{m-1}}+\int_{\mathcal{B}_R}|H_m|d\mu\\
&\leq\frac{2^m-1}{m}\left[\sum_{r=0}^{m-1}(R\max_{\partial\mathcal{B}_R}|A|)^r\right]\frac{|\partial\mathcal{B}_R|}{R^{m-1}}+\int_{\mathcal{B}_R}|H_m|d\mu,\\
\end{aligned}
\]
since $\binom{m}{r}(m-r)=m\binom{m-1}{r}\geq m.$
\end{remark}

\subsection{Einstein manifolds} Recall that a Riemannian manifold $\overline{M}$ is Einstein if there is a real number $\lambda,$ called Einstein constant, such that its Ricci tensor satisfies
$$
\overline{{\rm Ric}}(X,Y) = \lambda \lan X,Y\ran, X,Y\in T\overline{M}.
$$
Such manifolds are interesting from both mathematical and physical viewpoints. From the wiewpoint of physics, the metric of Einstein manifolds are solutions to the vacuum Einstein field equations. From the mathematical viewpoint, because the metric in such manifolds is a critical point of the total scalar curvature with constraints, see for instance \cite{B:78} for more details.

\begin{example}[Space forms]
The space forms $\mathcal{Q}_c^{m+1}$ are examples of Einstein manifolds with Einstein constant $\lambda=mc.$
\end{example}

Next, we present spaces whose sectional curvature is not constant.

\begin{example}[Product spaces]\label{torus}
Let $\overline{M}=\mathcal{Q}^{p_1}_{c_1}\times\mathcal{Q}^{p_2}_{c_2}$ be the product of two space forms. If $X,Y\in T\mathcal{Q}^{p_1}_{c_1}$ and $V,W\in T\mathcal{Q}^{p_2}_{c_2},$ then the sectional curvatures of $\overline{M}$ are
\[
\sect_{\overline{M}}(X,Y)=c_1, \ \sect_{\overline{M}}(V,W)=c_2, \ \sect_{\overline{M}}(X,V)=0.
\]
This gives $\ric(X)=(p_1-1)c_1$ and $\ric(V)=(p_2-1)c_2.$ Thus, $\overline{M}$ is Einstein if and only if $(p_1-1)c_1=(p_2-1)c_2.$ The same reasoning holds for an arbitrary product $\mathcal{Q}^{p_1}_{c_1}\times\mathcal{Q}^{p_2}_{c_2}\times\cdots\times\mathcal{Q}^{p_k}_{c_k}$ or for an arbitrary product of Einstein manifolds.
\end{example}

\begin{example}[Complex projective space]\label{CP}
The complex projective space $\mathbb{C}P^{m+1}$ is a compact Einstein manifold with sectional curvatures lying in the interval $[1/4,1]$ and Einstein constant $m+2.$
\end{example}

\begin{example}[Schwarzschild metric]\label{SS}
Consider $\mathcal S = \mathbb R^2\times \mathbb S^2$ with the metric 
\begin{equation}\label{schwar:01}
ds^2 = dr^2+ \varphi^2(r)ds_1^2+ \psi^2(r)ds_2^2,
\end{equation}
where we use polar coordinates in the plane $\mathbb R^2$, and $ds_1^2$ and $ds_2^2$ are the metrics on $\mathbb S^1$ and $\mathbb S^2$, respectively. It can be shown that the sectional curvatures of $\mathcal{S}$ satisfy
\[
\sect_{\mathcal{S}}(X,\partial_r)=-\frac{\vp''(r)}{\vp(r)}, X\in T\s^1, \ \sect_{\mathcal{S}}(V,\partial_r)=-\frac{\psi''(r)}{\psi(r)}, V\in T\s^2,
\]
where $-\vp''/\vp=2\psi''/\psi.$ Choose the functions $\varphi$ and $\psi$ verifying the following differential equations:
\begin{equation*}\label{schwar:02}
\begin{cases}
(\psi')^2&= 1+ C\psi^{-1},\\
2\psi''&=-C\psi^{-2},\\
\psi'&= \alpha\varphi,\\
\end{cases}
\end{equation*}
for $\alpha$ and $C$ determined by the initial data. To obtain smoothness of the metric at the origin, we require that $\varphi(0)=0$, $\varphi'(0)=1$ and $\psi(0)=\beta$, for some $\beta>0$. A simple computation gives $C=-\beta$ and $2\alpha=\beta^{-1}$. With this condition, we have $\psi''=(\beta/2)\psi^{-2}>0.$ A straightforward computation show that the family of metrics \eqref{schwar:01} have Ricci curvature zero and so $(\mathcal S, ds^2)$ are Einstein manifolds, for more details see, for instance, \cite{Petersen}, pp.75--76.
\end{example}

We now bring our attention to a family of Einstein manifolds with a warped product metric. Such manifolds are interesting and the reader can learn more about them in \cite{C:00}, \cite{KK:03}, \cite{CSW:11} and \cite{HPW:12}.

\begin{example}(Warped produtcs)
Recall that given two Riemannian manifolds $(M^n, g_M)$ and $(F^m, g_F)$ and a positive smooth function $w$ on $M$, the warped product metric on $M\times F$ is defined by
$$g=g_M+w^2g_F.$$
We denote it as $M\times_wF$. In \cite{CSW:11} the authors notice that $M\times_wF$ is an Einstein manifold if and only if 
$${\rm Ric}_M - \frac{m}{w}{\rm Hess}\, w = \lambda g_M,$$
where $F^m$ is an $m$-dimensional Einstein manifold. If $M$ has nonempty boundary, we assume that $w=0$ on $\partial M$, see \cite{HPW:12}.
\end{example}

If $M$ is a hypersurface of an Einstein manifold, then the first Newton transformation $P_1$ has divergence zero. This fact was proved in \cite{E:02}. 

\begin{lemma}\label{divpr:03} The divergence of the first Newton transformation $P_1$ vanishes if the ambient manifold $\overline{M}$ is Einstein.
\end{lemma}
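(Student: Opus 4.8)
We propose the following plan.

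The plan is to specialise to $r=1$ the divergence computation already carried out in the proof of Lemma~\ref{divpr:01}, where the relevant Newton transformation is $P_1=S_1I-A$, and then to identify the ambient curvature term that survives as the Ricci tensor of $\overline{M}$ evaluated on a pair formed by a tangent vector and the unit normal. I would start from
\[
\di P_1=\di(S_1 I)-\di A=\n S_1-\sum_{i=1}^m(\n_{e_i}A)(e_i),
\]
for a local orthonormal frame $\{e_1,\dots,e_m\}$ of $M$; equivalently, this is \eqref{divpr:02} with $r=1$, using $P_0=I$ and $\di P_0=0$. Hence the content of the lemma is the identity $\sum_i(\n_{e_i}A)(e_i)=\n S_1$ under the Einstein hypothesis.

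For the second step I would substitute $P_{r-1}=P_0=I$ into the Codazzi identity already used in the proof of Lemma~\ref{divpr:01}, namely $\lan(\n_{e_i}A)(e_i),Z\ran=\lan(\n_ZA)e_i,e_i\ran+\lan\overline{\rm R}(N,e_i)e_i,Z\ran$ for $Z\in TM$, and sum over $i$ to obtain
\[
\left\langle\sum_{i=1}^m(\n_{e_i}A)(e_i),\,Z\right\rangle=Z(S_1)+\left\langle\sum_{i=1}^m\overline{\rm R}(N,e_i)e_i,\,Z\right\rangle,\qquad Z\in TM,
\]
so that $\di P_1=-\big(\sum_i\overline{\rm R}(N,e_i)e_i\big)^\top$. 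Completing $\{e_1,\dots,e_m\}$ to an orthonormal frame $\{e_1,\dots,e_m,N\}$ of $T\overline{M}$ and using the pair symmetry of $\overline{\rm R}$ together with the vanishing of $\lan\overline{\rm R}(Z,N)N,N\ran$, the sum $\sum_i\lan\overline{\rm R}(N,e_i)e_i,Z\ran$ equals $-\overline{\ric}(Z,N)$; hence $\lan\di P_1,Z\ran=\overline{\ric}(Z,N)$ for every $Z\in TM$.

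Finally, if $\overline{M}$ is Einstein with constant $\lambda$, then $\overline{\ric}(Z,N)=\lambda\lan Z,N\ran=0$ for every $Z\in TM$, because $Z$ is tangent and $N$ is normal to $M$; therefore $\di P_1=0$. (The same computation in fact yields the more general fact that $\di P_1$ vanishes whenever $\overline{\ric}$ annihilates every normal--tangent pair along $M$.) I expect the only delicate point to be the careful bookkeeping of sign conventions in the Codazzi equation and in the symmetries of $\overline{\rm R}$ and $\overline{\ric}$; since the conclusion $\di P_1=0$ is insensitive to these signs, no genuine obstacle should arise.
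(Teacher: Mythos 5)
Your proof is correct and follows essentially the same route as the paper: specialise the general recursion \eqref{divpr:02} (equivalently \eqref{di-Pr}) to $r=1$, apply Codazzi with $P_0=I$ to identify the surviving ambient curvature contribution as $\overline{\ric}(N,\cdot)$ restricted to $TM$, and use the Einstein condition together with $N\perp TM$ to kill it. Your version is merely more explicit than the paper's terse statement, and your parenthetical generalisation (it suffices that $\overline{\ric}$ annihilate normal--tangent pairs) is a correct reading of what is actually used.
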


Notice that, tracing the Gauss equation twice, for an adapted orthonormal frame $\{e_1,e_2,\ldots,e_m,\eta\}$, we have
\begin{equation}\label{traceGauss}
\sum_{i=1}^m\overline{\rm Ric}(e_i,e_i) - \overline{\rm Ric}(\eta,\eta)={\rm Scal} -2S_2.
\end{equation}
In particular, if $\overline{M}$ is an Einstein manifold with Einstein constant $\lambda,$ i.e., $\overline{\rm Ric}(X,Y)=\lambda\lan X,Y\ran,$ then
\begin{equation}\label{eq.scal}
(m-1)\lambda={\rm Scal} -2S_2,
\end{equation}
where ${\rm Scal}$ denotes the scalar curvature of $M.$ 

Fix $x_0\in\overline{M}^{m+1}.$ Let $B_t(x_0)$ be a ball of $\overline{M}^{m+1}$ with center at $x_0$ and radius $t>0,$ and $\overline{\gamma}$ be a geodesic ray such that $\overline{\gamma}(0)=x_0.$ Define $F_{x_0}:[0,i(\overline{M},x_0))\to\R_+$ by
\begin{equation}\label{Fx0}
F_{x_0}(t)=\max_{B_t(x_0)}\left\{\sect_{\overline{M}}(\overline{V},\overline{\gamma}'), \forall\ \overline{V}\in T\overline{M},\overline{V}\perp\overline{\gamma}'\right\},    
\end{equation}
i.e., $F_{x_0}(t)$ is maximum of all the radial sectional curvatures of $\overline{M}^{m+1}$ in the geodesic ball $B_t(x_0).$ Here, $i(\overline{M},x_0)$ is the injectivity radius of $\overline{M}$ at $x_0.$ Since $F_{x_0}$ is a nondecreasing function of $t,$ it is continuous and differentiable almost everywhere. Let $G_{x_0}:[0,i(\overline{M},x_0))\to\R_+$ be a weak solution to
\begin{equation}\label{GF}
G_{x_0}''(t)+F_{x_0}(t)G_{x_0}(t)\leq 0.
\end{equation}
 
\begin{example}
For the examples we presented earlier, we have:
\begin{itemize}
    \item[(i)] $F_{x_0}(t)=c:=\max\{c_1,c_2\}$ for $\overline{M}=\mathcal{Q}_{c_1}^{p_1}\times\mathcal{Q}_{c_2}^{p_2},$ which gives $G_{x_0}(t)=\mathcal{S}_c(t),$ for every $x_0\in\overline{M};$
    \item[(ii)] $F_{x_0}(t)=1$ for $\overline{M}=\mathbb{C}P^{m+1},$ which gives $G_{x_0}(t)=\mathcal{S}_1(t),$ for every $x_0\in\overline{M};$
    \item[(iii)] $F_{x_0}(t)=2\psi''(t)/\psi(t)=-\vp''(t)/\vp(t)$ for $\overline{M}=\R^2\times\s^2$ with the Schwarzschild metric, since $\psi''>0,$ which gives $G_{x_0}(t)=\vp(t)$. Here, $x_0\in\overline{M}$ is the pole, i.e., the point of $\overline{M}$ where $t=0.$
\end{itemize}
Notice that, in all these examples, the function $G_{x_0}$ is nonnegative and nondecreasing. 
\end{example}

For hypersurfaces of Einstein manifolds we have the following Poincar\'e type inequality:
\begin{theorem}\label{Einstein}
Let $\overline{M}^{m+1}$ be an Einstein manifold, with Einstein constant $\lambda.$ Assume there exists $x_0\in\overline{M}^{m+1}$ such that $G_{x_0}$, defined by \eqref{GF}, is nonnegative and nondecreasing in a ball $B_R(x_0),$ and let $\rho(x)=\rho(x,x_0),$ be the distance function of $\overline{M}^{m+1}$ starting at $x_0$. If $M$ is a hypersurface of $\overline{M}^{m+1}$ and $\Omega\subset M\cap B_R(x_0)$ is a connected and open domain, with compact closure, such that $\overline{\Omega}\cap \partial M=\emptyset$ and $R<i(\overline{M},x_0),$ then, for every class $\mathcal{C}^{1}$ functions $u,f\colon M\ria\R,$ with $u$ nonnegative and compactly supported in $\Omega,$ we have
\begin{equation}\label{Poincare-ineq-S12}
\begin{aligned}
\int_\Omega u S_1 G_{x_0}'(\rho) e^{-f}d\mu &\leq C_0\int_\Omega \left[|P_1(\n u - u\n f)| +  |{\rm Scal} -(m-1)\lambda|u \right]e^{-f}d\mu,\\
\end{aligned}
\end{equation} 
where $C_0=\frac{1}{(m-1)}G_{x_0}(R)$ and $G_{x_0}$ is a solution to \eqref{GF}. In particular, if $P_1:TM\to TM$ is nonnegative definite, then
\begin{equation}\label{Poincare-ineq-Sr-12}
\int_\Omega u S_1  G_{x_0}'(\rho) e^{-f}d\mu \leq C_1\int_\Omega \left[|\n u - u\n f|S_1 +  \left|\frac{{\rm Scal}}{m-1}-\lambda\right|u \right]e^{-f}d\mu,
\end{equation}
for $C_1=(m-1)C_0$. 
\end{theorem}
\begin{proof}
Indeed, using Lemma \ref{divpr:03} we have that $\di P_1=0$ on an Einstein manifold. By the definition of $G_{x_0},$ we have that
\[
\sect_{\overline{M}}(\overline{V},\ \overline{\gamma}')\leq-\frac{G_{x_0}''(t)}{G_{x_0}(t)}, \forall\ \overline{V}\in T\overline{M},\ \mbox{with}\ \overline{V}\perp\overline{\gamma}'.
\]
From the second item of Lemma \ref{properties} we have $\tr(A  P_1)=2S_{2}$. Using \eqref{eq.scal} and Theorem \ref{Theo-Poincare}, we obtain \eqref{Poincare-ineq-S12}. Moreover, if $P_1$ is nonnegative definite, then\[|P_1(U)|\leq (\tr P_1)|U|=(m-1)S_1|U|,\] which gives \eqref{Poincare-ineq-Sr-12}, as desired. \end{proof}




\section{Rigidity results}\label{secrr}
In this section we state some rigidity results which are consequences of our Poincar\'e type inequality. Let
\begin{equation}\label{hc}
h_c(t)=
\begin{cases}
t,&\mbox{if}\ c=0;\\
\frac{1}{\sqrt{-c}}\sinh(\sqrt{-c}t),&\mbox{if}\ c<0;\\
1,&\mbox{if} \ c>0.
\end{cases}
\end{equation}
Notice that $\mathcal{S}_c(t)=h_c(t)$ for $c\leq 0$ and $\mathcal{S}_c(t)\leq \sqrt{c}h_c(t)$ for $c>0.$ Recall that $\mathcal{B}_R$ denotes the geodesic ball of $M$ with radius $R$ and center at some point $p_0\in M$. If $M$ is complete and we make $R\to\infty,$ the center $p_0$ of the ball does not matter, and for this reason we omit the center of the ball in the notation of the results of this and the next sections. 

The first result reads as follows:
\begin{theorem}\label{r-mean-zero}
Let $M$ be a complete $(r+1)$-minimal hypersurface, $1 \leq r \leq m-1,$ of a space form $\Q_c^{m+1}$ of constant sectional curvature $c\in\R$ such that $r$ is odd, or $r$ is even and $H_r\geq 0.$ If
\begin{equation}\label{hyp-r-mean-zero}
\liminf_{R\to\infty} \frac{h_c(R)}{R}\int_{\mathcal{B}_R\setminus\mathcal{B}_{R/2}} H_r d\mu =0,
\end{equation}
then $M$ is foliated by $(m-r+1)$-dimensional totally geodesic submanifolds of $\Q_c^{m+1}.$  Moreover,
\begin{itemize}
\item[(i)] if $\Q_c^{m+1}=\R^{m+1}$ and $M$ has nonnegative Ricci curvature, then $M=N^{r-1}\times \R^{m-r+1},$ where $N^{r-1}$ is a $(r-1)$-dimensional Riemannian manifold;
\item[(ii)] if $\Q_c^{m+1}=\s_+^{m+1}(c),$ the open upper hemisphere, and $M$ has Ricci curvature bounded from below by $c,$ then $M$ is totally geodesic.
\end{itemize}
Here, $h_c$ is defined by \eqref{hc}, $H_r$ is the r-mean curvature defined by \eqref{r-mean}, and $\mathcal{B}_R$ is the geodesic ball of $M.$
 \end{theorem}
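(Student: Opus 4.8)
The plan is to extract the foliation from the equality case of the Poincar\'e type inequality of Theorem~\ref{Sr-Rn}. Since $M$ is $(r+1)$-minimal, $H_{r+1}\equiv 0$, hence $S_{r+1}\equiv 0$, and by Remark~\ref{Pr-positive}, items (a) and (b) (using that $r$ is odd, or $r$ even with $H_r\ge 0$), after possibly reversing the orientation the Newton operator $P_r$ is non-negative definite. Apply \eqref{Poincare-ineq-Sr-2} with $f\equiv 1$ (so $\n f=0$) and $u=u_\ve$ the standard cutoff \eqref{u-eps} on an intrinsic ball $\mathcal B_R$: this gives
\[
\int_{\mathcal B_R} H_r\,\mathcal S_c'(\rho)\,d\mu \le (m-r)C_0\int_{\partial\mathcal B_R} H_r\,dS_\mu
\]
in the limit $\ve\to0$, where $C_0=\tfrac1{m-r}\mathcal S_c\!\left(\tfrac{\diam\mathcal B_R}{2}\right)$. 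Because $\diam\mathcal B_R\le 2R$ and $\mathcal S_c$ is increasing, the right-hand side is bounded by $\mathcal S_c(R)\int_{\partial\mathcal B_R}H_r\,dS_\mu$, which is $\le\sqrt{c}\,h_c(R)\int_{\partial\mathcal B_R}H_r\,dS_\mu$ for $c>0$ and $=h_c(R)\int_{\partial\mathcal B_R}H_r\,dS_\mu$ for $c\le0$. Hypothesis \eqref{hyp-r-mean-zero} then forces $\int_M H_r\,\mathcal S_c'(\rho)\,d\mu=0$ after letting $R\to\infty$; since $H_r\ge0$, $\mathcal S_c'(\rho)>0$ on the relevant range, and $H_r$ cannot vanish identically on an $(r+1)$-minimal hypersurface with the stated sign condition unless $M$ is totally geodesic-like, we conclude that all the \emph{inequalities used in the proof of Theorem~\ref{Sr-Rn} are equalities} on $M$.

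Next I would trace back through the proof of Theorem~\ref{Theo-Poincare} / Lemma~\ref{lemma-main1} to read off what equality imposes. Equality in the Cauchy--Schwarz step $\lan-\n\rho,P_r(\n u)\ran\le |P_r(\n u)|$ together with equality in $|P_r(U)|\le(\tr P_r)|U|$ and equality $\lan-\overline\n\rho,\eta\ran\tr(AP_r)=|\tr(AP_r)|$ (here automatic since $S_{r+1}=0$) means: either $P_r=0$ on $M$, or the image of $P_r$ is one-dimensional and aligned with $\n\rho$. In the equality regime of Lemma~\ref{lemma-main1}, the Hessian comparison $\hess_{\overline M}\rho(Y_i,Y_i)\ge \tfrac{\mathcal S_c'}{\mathcal S_c}$ must be an equality for every eigendirection with $\theta_i>0$, which by the rigidity case of the Hessian comparison theorem forces the radial geodesics to be, in those directions, totally geodesic inside $M$ — this is the mechanism producing the foliation. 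Carefully, positivity of $H_r=\binom{m}{r}^{-1}S_r$ with $S_{r+1}=0$ implies (by Gårding's inequality / the hyperbolicity of the $S_j$) that exactly $m-r$ of the principal curvatures vanish, so $A$ has a kernel of dimension $m-r$; the kernel distribution is then shown to be integrable with totally geodesic leaves, and each leaf is a totally geodesic $(m-r+1)$-dimensional submanifold of $\Q_c^{m+1}$ — wait, the leaves are $(m-r)$-dimensional in $M$; I should instead argue the leaves, together with the normal $\eta$, sweep out $(m-r+1)$-dimensional totally geodesic submanifolds of the ambient space, matching the statement. This splitting-of-$A$ plus integrability argument is the technical heart; it is essentially a Reilly/Hartman-type rigidity statement for hypersurfaces with a large nullity of the second fundamental form.

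For the refinements (i) and (ii) I would use that the totally geodesic leaves are flat (in $\R^{m+1}$) or of curvature $c$ (in $\s_+^{m+1}(c)$), copies of $\R^{m-r+1}$ or $\s^{m-r+1}_+(c)$ respectively. In case (i), complete flat totally geodesic leaves of dimension $m-r+1$ through every point, combined with $\ric\ge0$, let me invoke the Cheeger--Gromoll splitting theorem (each leaf contains lines): $M$ splits isometrically as $N^{r-1}\times\R^{m-r+1}$, and a dimension count on the factor carrying the curvature gives the claimed $N^{r-1}$. In case (ii), $M\subset\s_+^{m+1}(c)$ is foliated by totally geodesic $\s^{m-r+1}_+(c)$'s; the hypothesis $\ric_M\ge c$ together with the Gauss equation — for a hypersurface of a space form, $\ric_M(X)=(m-1)c+mH\lan AX,X\ran-|AX|^2$ on unit $X$ — forces the remaining $r$ principal curvatures to vanish as well, so $A\equiv0$ and $M$ is totally geodesic; alternatively one can push the maximality/ convexity of the hemisphere against the existence of a complete totally geodesic $\s^{m-r+1}_+(c)$ subfoliation to reach the same conclusion. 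The main obstacle I anticipate is the second paragraph: converting the chain of equalities into the genuine integrability-and-total-geodesy of the nullity distribution of $A$, since one must be careful that $S_r>0$ is used to pin the rank of $A$ exactly and that the equality in the Hessian comparison propagates along the leaves rather than holding only pointwise.
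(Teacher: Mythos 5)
Your proof goes off the rails right after you obtain $\int_M H_r\,\mathcal S_c'(\rho)\,d\mu=0$. Since $H_r\ge 0$ (this follows from $P_r\ge 0$ and $\tr P_r=(m-r)S_r$) and $\mathcal S_c'(\rho)>0$ on the interior of the domain of $\rho$, the \emph{direct} conclusion is that $S_r\equiv 0$ — you should not shy away from it. Instead you assert that ``$H_r$ cannot vanish identically\dots unless $M$ is totally geodesic-like'' and pivot to an equality-case / rigidity analysis of the Cauchy--Schwarz and Hessian comparison steps. This is both unnecessary and, as set up, unworkable: the Poincar\'e inequality \eqref{Poincare-ineq-Sr-2} was obtained by dropping a nonnegative term and applying Cauchy--Schwarz pointwise, and forcing those to be equalities everywhere does not, by itself, yield the structure of $M$. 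In particular you end up convincing yourself that $H_r>0$ somewhere with $S_{r+1}=0$, so that $A$ has rank exactly $r$ and nullity $m-r$ — this is an off-by-one error that never gets corrected.

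The missing ingredient is the step the paper actually uses: once $S_r\equiv 0\equiv S_{r+1}$, Lemma 2.1 of Hounie--Leite \cite{HL} shows that the rank of $A$ is at most $r-1$, hence the index of relative nullity of $M$ is at least $m-r+1$. The relative nullity distribution is automatically integrable with totally geodesic leaves (Proposition 1.18 of \cite{D}), and those leaves are totally geodesic $(m-r+1)$-dimensional submanifolds of $\Q_c^{m+1}$ as claimed; there is no subtle integrability argument to supply. Your suggestions for (i) and (ii) are in the right spirit but not what the paper uses: for (i), the paper invokes Hartman's splitting theorem for complete submanifolds with nonnegative Ricci curvature (Theorem 7.15 in \cite{D}), which is the tool purpose-built for complete relative nullity leaves, whereas Cheeger--Gromoll would require you to first exhibit a line in $M$; for (ii), the paper cites Corollary 7.12 of \cite{D}, while your Gauss-equation argument to kill the remaining principal curvatures is left as a sketch. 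With the rank estimate wrong, these dimension counts would also fail.
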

\begin{proof}
First notice that, by the first item in Remark \ref{Pr-positive}, since $S_{r+1}\equiv 0,$ we have that $P_r$ is semi-definite. If $r$ is odd, we can choose an orientation such that $P_r$ is positive semi-definite. This implies $\frac{1}{m-r}\tr P_r =S_r\geq 0.$ If $r$ is even it does not happen, but the assumption that $S_r\geq 0$ assures that $P_r$ is positive semi-definite. This implies that 
\[
|P_r(U)|\leq (\tr P_r)|U|=(m-r)S_r|U|, \ U\in TM.
\] 
Taking $\Omega=\mathcal{B}_R$ in the inequality \eqref{Poincare-ineq-Sr-2}, we have that $\diam \mathcal{B}_R\leq 2R,$ since the extrinsic distance is less than or equal to the intrinsic distance, and
\begin{equation}\label{integral-teo41}
\int_{\mathcal{B}_R} u S_r\mathcal{S}_c'(\rho) d\mu  \leq \mathcal{S}_c(R)\int_{\mathcal{B}_R} |\n u|S_r d\mu,   
\end{equation}
where $\mathcal{S}_c$ is defined by \eqref{Sc}. Taking a positive cut-off function $u:M\to\R$ such that 
\begin{equation}\label{cutoff}
\begin{cases}
    u\equiv1&\mbox{in}\quad \mathcal{B}_{R/2};\\
    |\n u|\leq C/R&\mbox{in}\quad \mathcal{B}_{R}\setminus\mathcal{B}_{R/2};\\
    u\equiv0&\mbox{in}\quad M\setminus\mathcal{B}_{R},
\end{cases}    
\end{equation}
for some $C>0,$ we obtain
\[
\begin{aligned}
\int_{\mathcal{B}_{R/2}} S_r \mathcal{S}'_c(\rho)d\mu &\leq \int_{\mathcal{B}_R} u S_r \mathcal{S}'_c(\rho)d\mu\\
&\leq \mathcal{S}_c(R)\int_{\mathcal{B}_R} S_r|\n u| d\mu\\
&\leq C\frac{\mathcal{S}_c(R)}{R}\int_{\mathcal{B}_R\setminus\mathcal{B}_{R/2}} S_r d\mu\\
&\leq C\max\{1,\sqrt{c}\}\frac{h_c(R)}{R}\int_{\mathcal{B}_R\setminus\mathcal{B}_{R/2}} S_r d\mu.
\end{aligned}
\] 
Making $R\to\infty,$ we obtain
\[
\int_M S_r \mathcal{S}'_c(\rho)d\mu \leq C\max\{1,\sqrt{c}\}\liminf_{R\to\infty}\frac{h_c(R)}{R}\int_{\mathcal{B}_R\setminus\mathcal{B}_{R/2}} S_r d\mu=0,
\]
which implies that $S_r\equiv 0.$ Since $S_{r+1}\equiv 0\equiv S_r,$ by Lemma 2.1, p.252, of \cite{HL}, we obtain that $A$ has rank at most $r-1,$ i.e., $M$ has index of relative nullity at least $m-r+1.$ By using Proposition 1.18, p.24 of \cite{D}, we conclude that $M$ is foliated by $(m-r+1)$-dimensional totally geodesic submanifolds of $\Q_c^{m+1}.$ If $\Q_c^{m+1}=\R^{m+1}$ and $M$ has nonnegative Ricci curvature, then by using Hartman's splitting theorem (see \cite{D}, Theorem 7.15, p.196), $M=N^{r-1}\times\R^{m-r+1}.$ If $\Q_c^{m+1}=\s_+^{m+1}(c)$ and the Ricci curvature of $M$ is bounded from below by $c,$ then by Corollary 7.12, of \cite{D}, $M$ is totally geodesic.
\end{proof}

\begin{remark}
If we replace the decay condition on $H_r$ in the hypothesis of Theorem \ref{r-mean-zero} by
\[
\liminf_{R\to\infty}\frac{h_c(R)}{R}\int_{ \mathcal{B}_R\setminus\mathcal{B}_{R/2}}|A|^rd\mu =0,
\]
then we do not need assume that $H_r\geq 0$ for $r$ even. Indeed, in this case we can use \eqref{Poincare-ineq-Sr} and the discussion in Remark \ref{rem-Pr-Ar}, to estimate $|P_r|.$
\end{remark}


Let $\overline{M}^{m+1}$ be an Einstein manifold and $M$ be a complete hypersurface of $\overline{M}^{m+1}$. Define, for each $x_0\in\overline{M}^{m+1}$,
\[
\mathcal{G}_{x_0}(t)=
\begin{cases}
G_{x_0}(t),&\mbox{if}\ i(\overline{M},x_0)=\infty\ \mbox{and}\ M\cap (\overline{M}\setminus B_R(x_0))\neq\emptyset, \forall R>0;\\
1,&\mbox{if} \ M\subset B_{R_0}(x_0)\ \mbox{for some}\ R_0>0,\\
\end{cases}
\]
where $G_{x_0}$ is the solution to \eqref{GF}. Thus, for Einstein manifolds, we have:
\begin{theorem}\label{cor-scal}
Let $\overline{M}^{m+1}$ be an Einstein manifold, with Einstein constant $\lambda.$ Assume there exists $x_0\in\overline{M}^{m+1}$ such that $G_{x_0},$ defined by \eqref{GF}, is nonnegative and nondecreasing.  If $M$ is a complete hypersurface of $\overline{M}^{m+1},$ with constant scalar curvature $(m-1)\lambda,$ such that
\begin{equation}\label{hyp-Einstein-zero}
\liminf_{R\to\infty} \frac{\mathcal{G}_{x_0}(R)}{R}\int_{\mathcal{B}_R\setminus\mathcal{B}_{R/2}} H d\mu =0,
\end{equation}
then $M$ is totally geodesic. 

Here, $H$ denotes the mean curvature of $M$ and $\mathcal{B}_R$ denotes the geodesic ball of $M.$ 
\end{theorem}

\begin{proof}
The eigenvalues of $P_1$ are $S_1-\lambda_i,$ where $\lambda_i$ are the principal curvatures of $M.$ Since $S_2\equiv 0$ and $S_1\geq0,$ we have
\[
S_1-\lambda_i \leq S_1 + |\lambda_i|\leq S_1 + \sqrt{\lambda_1^2+\cdots+\lambda_m^2}=S_1 + |A| = 2S_1,
\]
i.e., $|P_1|\leq 2S_1.$ On the other hand, by the definitions of $F_{x_0}$ (see \eqref{Fx0}) and $G_{x_0}$, we have 
\[
\sect_{\overline{M}}(\overline{V},\ \overline{\gamma}')\leq -\frac{G_{x_0}''(t)}{G_{x_0}(t)}, \forall\ \overline{V}\in T\overline{M},\ \mbox{with}\ \overline{V}\perp\overline{\gamma}'.
\]
Following the same reasoning of the proof of Theorem \ref{r-mean-zero}, but applying Theorem \ref{Einstein}, for the cut-off function \eqref{cutoff}, we have
\[
\begin{aligned}
\int_{\mathcal{B}_{R/2}}S_1 G'_{x_0}(\rho)d\mu &\leq \frac{G_{x_0}(R)}{m-1}\int_{\mathcal{B}_R}|P_1(\n u)|d\mu\\ 
&\leq \frac{2m G_{x_0}(R)}{m-1}\int_{\mathcal{B}_R}|\n u|H\mu\\
&\leq \frac{2m}{m-1}\frac{G_{x_0}(R)}{R}\int_{\mathcal{B}_R\setminus\mathcal{B}_{R/2}} Hd\mu.
\end{aligned}
\]
Taking $R\to\infty$ and observing that $G_{x_0}(R)<G_{x_0}(R_0)<\infty$ over $M,$ if $M\subset B_{R_0}(x_0)$ for some $R_0>0,$ we conclude, by using the hypothesis \eqref{hyp-Einstein-zero}, that $S_1\equiv 0.$ This gives $|A|=\sqrt{S_1^2-2S_2}=0,$ i.e., $M$ is totally geodesic.
\end{proof}

Since space forms are particular cases of Einstein manifolds for $\lambda=mc$, we have

\begin{corollary}
If $M$ is a complete hypersurface with constant scalar curvature $m(m-1)c$ in a space form $\Q_c^{m+1}$ of constant sectional curvature $c\in\R,$ such that 
\[
\liminf_{R\to\infty} \frac{h_c(R)}{R}\int_{\mathcal{B}_R\setminus\mathcal{B}_{R/2}} H d\mu =0,
\]
then $M$ is totally geodesic.

Here, $h_c$ is defined by \eqref{hc}, $H$ is the mean curvature, and $\mathcal{B}_R$ is the geodesic ball of $M$ with radius $R.$
\end{corollary}

As a consequence of the proof of Theorem \ref{r-mean-zero}, we obtain:
\begin{corollary}\label{r-mean-zero-3}
There is no complete $(r+1)$-minimal hypersurface, $1 \leq r \leq m-1,$ in a space form $\Q_c^{m+1}$ of constant sectional curvature $c\leq 0,$ such that 
\begin{itemize}
\item[(i)] either $r$ is odd, or $r$ is even and $H_r\geq 0;$
\item[(ii)] $M$ is contained in a geodesic ball of $\Q_c^{m+1},$ and
\item[(iii)] $\displaystyle{\liminf_{R\to\infty}\frac{1}{R} \int_{\mathcal{B}_R\setminus\mathcal{B}_{R/2}} H_r d\mu =0.}$
\end{itemize}
Here, $H_r$ is the $r$-mean curvature defined by \eqref{r-mean}, and $\mathcal{B}_R$ denotes the geodesic ball of $M$ with radius $R.$
\end{corollary}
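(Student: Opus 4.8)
The plan is to rerun the argument of Theorem \ref{r-mean-zero}, exploiting hypothesis (ii) to replace the diverging factor $h_c(R)$ by a fixed constant, so that the weaker decay in (iii) still forces $S_r\equiv 0$, and then to turn the resulting totally geodesic foliation into a contradiction with the boundedness assumed in (ii). Since $M$ is $(r+1)$-minimal we have $S_{r+1}\equiv 0$, and by Proposition 2.8, p.192 of \cite{C} together with (i) we may orient $M$ so that $P_r$ is positive semi-definite, whence $|P_r(U)|\le(\tr P_r)|U|=(m-r)S_r|U|$ for all $U\in TM$. Because $M$ is contained in a geodesic ball $B_{\rho_0}\subset\Q_c^{m+1}$, every intrinsic ball $\mathcal B_R$ has extrinsic diameter at most $2\rho_0$, so $\mathcal S_c\!\left(\frac{\diam\mathcal B_R}{2}\right)\le \mathcal S_c(\rho_0)=:C$ for all $R$. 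Plugging $f$ constant and the cut-off $u=u_\ve$ (as in the proof of Theorem \ref{r-mean-zero}) into \eqref{Poincare-ineq-Sr-2}, letting $\ve\to0$, using the coarea formula, and using that $\mathcal S_c'(\rho)\ge 1$ for $c\le 0$, I get
\[
\int_{\mathcal B_R}S_r\,d\mu\ \le\ \int_{\mathcal B_R}S_r\,\mathcal S_c'(\rho)\,d\mu\ \le\ C\int_{\partial\mathcal B_R}S_r\,dS_\mu .
\]
Letting $R\to\infty$ and invoking (iii) (recall $S_r=\binom{m}{r}H_r$, and the left-hand side is monotone in $R$) yields $\int_M S_r\,d\mu=0$; since $S_r\ge 0$, this gives $S_r\equiv 0$.

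Now $S_r\equiv S_{r+1}\equiv 0$, so by Lemma 2.1, p.252 of \cite{HL} the Weingarten operator $A$ has rank at most $r-1$ everywhere, i.e. the index of relative nullity of $M$ is at least $m-r+1\ge 2$. As in the proof of Theorem \ref{r-mean-zero}, Proposition 1.18, p.24 of \cite{D} then shows that $M$ is foliated by totally geodesic submanifolds of $\Q_c^{m+1}$ of dimension $m-r+1\ge 1$; since $M$ is complete, the leaf through a point where the index of relative nullity attains its minimum is a \emph{complete} totally geodesic submanifold of $\Q_c^{m+1}$. For $c\le 0$ such a submanifold of positive dimension is isometric to $\R^{m-r+1}$ or to a hyperbolic space, hence is an unbounded subset of $\Q_c^{m+1}$; but it is contained in $M\subset B_{\rho_0}$, which is bounded. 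This contradiction shows that no such $M$ exists.

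I expect the only delicate point to be the completeness of the relative nullity leaf used in the last step: one restricts to the open subset of $M$ where the index of relative nullity is minimal, on which the standard completeness theorem for relative nullity foliations of complete submanifolds applies (this is already implicit in the invocation of \cite{D} in the proof of Theorem \ref{r-mean-zero}), producing a single complete, unbounded, totally geodesic leaf sitting inside $B_{\rho_0}$. Everything else is a direct adaptation of the estimate carried out there, the new inputs being merely that (ii) bounds $\mathcal S_c(\diam\mathcal B_R/2)$ uniformly in $R$ and that positive-dimensional totally geodesic submanifolds of $\Q_c^{m+1}$ with $c\le 0$ are unbounded (which is exactly where the sign restriction $c\le 0$ is used).
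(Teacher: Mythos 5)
Your proof is correct and is essentially the argument the paper intends when it states that the corollary follows from the proof of Theorem \ref{r-mean-zero}: hypothesis (ii) replaces the $h_c(R)$ factor by the fixed constant $\mathcal{S}_c(\rho_0)$, so the weaker decay (iii) still forces $S_r\equiv 0$, and then the resulting complete, positive-dimensional, totally geodesic relative nullity leaf is unbounded in $\Q_c^{m+1}$ for $c\leq 0$, contradicting (ii).
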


\begin{remark} We would like to point out that assumptions about the integral growth for $H_r$ on balls are common in the literature; for example, see \cite{E:02}, \cite{AdCN:16} and references therein. More specifically, we notice that, in \cite{AdCN:16}, Do Carmo, the first, and third authors, obtained a non-existence result for hypersurfaces in $\mathbb{R}^4$ with zero scalar curvature, Gauss-Kronecker curvature $H_m$ bounded away from zero, and polynomial growth of the quantity $\int_{\mathcal{B}_R}H\, d\mu.$ Furthermore, there are in the literature some splitting results for hypersurfaces with constant scalar curvature immersed in some space forms; see for example \cite{CY:77} and \cite{AGR:12}. Finally, we see that our results combines some kind of decay of $\int_{\mathcal{B}_R\setminus\mathcal{B}_{R/2}} H_r \, d\mu$ with geometric constrains to produce results of rigidity or non-existence.
\end{remark}

\section{Rigidity of self-similar solutions to curvature flows}\label{sec-curv-flow}

Let $\psi:M^m\to\R^{m+1}$ be hypersurface. The evolution of $\psi(M)$ by the curvature is smooth a one-parameter family $\Psi:M\times I\to\R^{m+1}$ of immersions $\Psi_t:=\Psi(\cdot,t):M\to\R^{m+1}$ solving the initial value problem
\begin{equation}\label{curv-flow}
\left\{ \begin{array}{lll}
\dfrac{\partial \Psi}{\partial t}(x,t)&=& (S_{r+1}(x,t))^\alpha \eta(x,t),\\
\Psi(x, 0)&=& \psi(x),
\end{array}\right.
\end{equation}
for $\alpha\in \R-\{0\}$ and $S_{r+1}$ is defined by \eqref{def-Sr}. The initial value problem \eqref{curv-flow} is also called a curvature flow. These flows have been studied by many authors in the last four decades, see, for example, \cite{Chow:1985}, \cite{Tso:1985}, \cite{Chow:1987}, \cite{Urbas:1990}, \cite{Urbas:1991}, \cite{And:1994}, \cite{And:2007}, their citations, and references therein. We also quote the recent book \cite{ACGL:2020} for an extensive introduction of these flows.

A homothetic solution to the flow \eqref{curv-flow} is a hypersurface satisfying the equation 
\begin{equation}\label{homo-sol}
S_{r+1}^\alpha = \delta \lan {\psi},\eta\ran, 
\end{equation}
for some nonzero real number $\delta$. A hypersurface satisfying \eqref{homo-sol} evolves by dilations and contractions via the flow. If $\delta>0,$ then the hypersurface evolves by dilation and it is called a self-expander. If $\delta<0,$ then the hypersurface evolves by contraction and its is called a self-shrinker. 
\begin{remark}
Homothetic solutions are examples of self-similar solutions, which are those solutions which evolves by flow without changing their shapes. Other examples are the translating solitons, which evolves translating the initial hypersurface in a fixed direction and those which evolves by a rotation of $\R^{m+1}.$ For more details, see \cite{ACGL:2020}.
\end{remark}

Homothetic solutions to curvature flows have received considerable attention in recent years, see, for example \cite{MC:2011}, \cite{CM:2012}, \cite{DLW:2016}, \cite{BCD:2017},\cite{GLM:2018}, \cite{GK:2018}, \cite{CCF:2021}, \cite{MC:2021}, \cite{ASNZ-PP}, and \cite{ASNZ:2022}.

For homothetic solutions to the curvature flow \eqref{curv-flow} we can state:

\begin{theorem}\label{teo-homo-r}
Let $M$ be a complete homothetic solution to the curvature flow \eqref{curv-flow} in $\R^{m+1},$ $1 \leq r \leq m-1,$ such that 
\begin{itemize}
    \item[(i)] $\alpha=p/q,$ where $p$ and $q$ are odd integers;
    \item[(ii)] or $\alpha>0$ and $S_{r+1}\geq 0;$
    \item[(iii)] or $\alpha<0$ and $S_{r+1}>0.$
\end{itemize}
If $\delta S_r\geq 0$ and 
\begin{equation}\label{hyp-teo-homo}
\liminf_{R\to\infty} \int_{\mathcal{B}_R\backslash\mathcal{B}_{R/2}} |A|^r d\mu =0,
\end{equation}
then 
\begin{itemize}
\item[(i)] $M$ is a hyperplane if $\alpha>0;$

\item[(ii)] there is no such hypersurface if $\alpha<0.$
\end{itemize}
Here, $\mathcal{B}_R$ denotes the geodesic ball of $M$ with radius $R$ and $|A|$ is the norm of the second fundamental form.
\end{theorem}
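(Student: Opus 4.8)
My plan is to combine the divergence identity of Proposition~\ref{prop-div-P1-const}, applied to the position vector field, with the homothetic equation \eqref{homo-sol}, in the spirit of the proof of Theorem~\ref{r-mean-zero}. First I would take $\overline X=\psi$ (the immersion, viewed as the position vector along $M$), $T=P_r$ and $f\equiv0$ in Proposition~\ref{prop-div-P1-const}. Since $\overline\nabla_E\psi=E$ for every $E\in TM$ one has $\tr\big(E\mapsto P_r((\overline\nabla_E\psi)^\top)\big)=\tr P_r=(m-r)S_r$ by Lemma~\ref{properties}(1); since $\R^{m+1}$ is a space form, $\di P_r=0$ by Lemma~\ref{divpr:01}; and $\tr(AP_r)=(r+1)S_{r+1}$ by Lemma~\ref{properties}(2). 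Hence
\[
\di\big(P_r(\psi^\top)\big)=(m-r)S_r+(r+1)S_{r+1}\langle\psi,\eta\rangle .
\]
Substituting $\langle\psi,\eta\rangle=S_{r+1}^{\alpha}/\delta$ from \eqref{homo-sol} gives $\di\big(P_r(\psi^\top)\big)=(m-r)S_r+\tfrac{r+1}{\delta}\,S_{r+1}^{1+\alpha}$.

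The point of hypotheses (i)--(ii) is that in both cases $S_{r+1}^{1+\alpha}\ge0$ on $M$: in case (i), $1+\alpha=(p+q)/q$ with $p+q$ even, so $S_{r+1}^{1+\alpha}=(S_{r+1}^{1/q})^{p+q}\ge0$; in case (ii), $S_{r+1}\ge0$, and in fact $S_{r+1}>0$ whenever $\alpha<0$ so that $S_{r+1}^{\alpha}$ is defined. Together with $\delta S_r\ge0$ this shows $\di(P_r(\psi^\top))$ is a sum of two terms each having the sign of $\delta$, hence of constant sign on $M$. Now I would integrate this identity over the intrinsic geodesic balls $\mathcal B_R$, using the cut-off functions $u_\ve$ of \eqref{u-eps} and letting $\ve\to0$ (equivalently, the divergence theorem on the compact set $\overline{\mathcal B_R}$). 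The boundary integral $\int_{\partial\mathcal B_R}\langle P_r(\psi^\top),\nu\rangle\,dS_\mu$ is bounded in absolute value by $(2^m-1)\int_{\partial\mathcal B_R}|A|^r|\psi|\,dS_\mu$, using $|P_r|\le(2^m-1)|A|^r$ from Remark~\ref{rem-Pr-Ar}; since $|\psi|\le c_0+R$ on $\partial\mathcal B_R$ for a constant $c_0$ depending only on the ball, the decay hypothesis \eqref{hyp-teo-homo} forces this boundary term to tend to $0$ as $R\to\infty$. Therefore $\int_M\di(P_r(\psi^\top))\,d\mu=0$, and since the integrand has constant sign, $\di(P_r(\psi^\top))\equiv0$; as the two summands share the sign of $\delta$, both vanish identically, so $S_r\equiv0$ and $S_{r+1}^{1+\alpha}\equiv0$.

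It then remains to read off the rigidity. If $\alpha>0$, then $1+\alpha>0$, so $S_{r+1}\equiv0$; feeding this back into \eqref{homo-sol} gives $\langle\psi,\eta\rangle\equiv0$, so the position vector is everywhere tangent to $M$ (its flow being the dilations $x\mapsto e^tx$), i.e.\ $M$ is a cone with vertex at the origin. For $r=1$ one finishes immediately, since $S_1\equiv S_2\equiv0$ forces $A\equiv0$ by Lemma~2.1 of \cite{HL}, so $M$ is a hyperplane. For general $r$, on a cone $|A|=O(|\psi|^{-1})$, so a second application of \eqref{hyp-teo-homo} forces the link $M\cap\mathbb S^m$ to be totally geodesic in $\mathbb S^m$, whence $M$ is a hyperplane. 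If $\alpha<0$, then \eqref{homo-sol} forces $S_{r+1}$ to be nowhere zero, so $\tfrac{r+1}{\delta}S_{r+1}^{1+\alpha}$ is a nowhere-vanishing function with the sign of $\delta$; combined with $\delta S_r\ge0$, the function $\di(P_r(\psi^\top))$ is then nowhere zero and of constant sign, so $\int_{\mathcal B_R}\di(P_r(\psi^\top))\,d\mu$ is bounded away from $0$ for large $R$, contradicting the vanishing of the boundary term. Hence no such $M$ exists.

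The main obstacle is the sign bookkeeping: one must verify, in every admissible configuration of $\alpha$ and of the sign of $\delta$, both that the substitution $\langle\psi,\eta\rangle=S_{r+1}^{\alpha}/\delta$ is legitimate (this is exactly what (i)--(ii) ensure) and that the resulting expression $(m-r)S_r+\tfrac{r+1}{\delta}S_{r+1}^{1+\alpha}$ carries a definite sign (this is where $\delta S_r\ge0$ enters), including the borderline case $\alpha=-1$. A secondary delicate point is the final step for $r\ge2$, namely passing from ``$M$ is a complete cone satisfying \eqref{hyp-teo-homo}'' to ``$M$ is a hyperplane''.
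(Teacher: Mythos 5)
Your proof is correct and follows essentially the same route as the paper: it applies the divergence identity with $\overline X=\psi$ and $T=P_r$ (the paper uses its integral form \eqref{ineq-poin} together with Remark \ref{remark-Rn}, which is the same identity after one integration by parts), substitutes the soliton equation \eqref{homo-sol}, exploits the sign conditions $\delta S_r\ge 0$ and $S_{r+1}^{1+\alpha}\ge 0$, and lets the decay hypothesis \eqref{hyp-teo-homo} annihilate the boundary term, yielding $S_r\equiv S_{r+1}\equiv\lan\psi,\eta\ran\equiv 0$ when $\alpha>0$ and a contradiction when $\alpha<0$. Two small points of comparison: the paper bounds the factor $\rho$ by $\tfrac12\diam\mathcal B_R\le R$, implicitly identifying the soliton center with the center of the smallest extrinsic ball containing $\mathcal B_R$, whereas your bound $|\psi|\le c_0+R$ on $\partial\mathcal B_R$ is the precise statement and still suffices under \eqref{hyp-teo-homo}; and your cone-link argument for the case $r\ge 2$, $\alpha>0$ is an alternative to the paper's direct appeal to \cite{DT}, but both lead to the same hyperplane conclusion.
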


\begin{proof}
Using \eqref{ineq-poin} and Remark \ref{remark-Rn}, we have that
\[
(m-r)\int_\Omega u S_r d\mu = \int_\Omega \lan-\overline{X},P_r(\n u)\ran d\mu +(r+1)\int_\Omega u\lan-\overline{X},\eta\ran S_{r+1} d\mu,
\]
for $\overline{X}=\rho\overline{\n}\rho.$ Since $S_{r+1}^{\alpha}=\delta\lan\psi,\eta\ran$ and $\psi=\overline{X}$ in $\R^{m+1},$ we have
\[
\begin{aligned}
\int_\Omega u [(m-r) \delta S_r + (r+1)S_{r+1}^{\alpha+1}]d\mu &= \delta \int_\Omega \lan -\rho\n\rho,P_r(\n u)\ran d\mu\\
&\leq \frac{|\delta|\diam\Omega}{2}\int_\Omega |P_r||\n u|d\mu\\
&\leq \frac{(2^m-1)|\delta|\diam\Omega}{2}\int_\Omega |A|^r|\n u|d\mu,\\
\end{aligned}
\]
since $|P_r|\leq (2^m-1)|A|^r$ by Remark \ref{rem-Pr-Ar}. Taking $\Omega=\mathcal{B}_R$ a geodesic ball of $M$ with radius $R,$ we have $\diam \mathcal{B}_R\leq 2R,$ since the extrinsic distance is less than or equal to the intrinsic distance. Using the cut-off function defined in \eqref{cutoff}, we obtain
\[
\int_{\mathcal{B}_{R/2}} u [(m-r)\delta S_r + (r+1)S_{r+1}^{\alpha+1}]d\mu \leq c(m,r)|\delta|\int_{\mathcal{B}_R\setminus\mathcal{B}_{R/2}} |A|^r d\mu.\\
\]
Notice that, if $\alpha=\frac{2a+1}{2b+1},$ $a,b\in\Z,$ then 
\[
S_{r+1}^{\alpha+1}=\left(S_{r+1}^{\frac{a+b+1}{2b+1}}\right)^2\geq 0
\] no matter the signal of $S_{r+1}.$ Taking $R\to\infty$ and using the hypothesis, we obtain 
\[
S_r\equiv S_{r+1}\equiv 0\equiv\lan\psi,\eta\ran,
\]
which implies that $M$ is a hyperplane, since it is smooth, for $\alpha>0$, (see also \cite{DT}) and leads to a contradiction for $\alpha<0,$ since $S_{r+1}^\alpha$ is defined, in this case, only for $S_{r+1}>0$.
\end{proof}

\begin{remark}
The proof of the Theorem \ref{teo-homo-r} holds in the general setting of a Riemannian manifold $\overline{M}^{m+1}$ with bounded sectional curvatures by $G''(t)/G(t).$ So, we consider hypersurfaces satisfying the equation
\[
S_{r+1}^{\alpha}=\delta\lan G(\rho)\overline{\n}\rho,\eta\ran.
\]
These surfaces have been object of research in recent years as self-similar solutions to curvature flows in ambient spaces other than $\R^{m+1},$ see, for example, \cite{ALR:2020} and \cite{CMR:2020}.
If 
\[
\begin{cases}
\displaystyle{\liminf_{R\to\infty} \frac{G(R)}{R}\int_{\mathcal{B}_R\setminus\mathcal{B}_{R/2}}|A|^r d\mu =0},&\mbox{if} \ G \ \mbox{is unbounded};\\
\displaystyle{\liminf_{R\to\infty}\frac{1}{R} \int_{\mathcal{B}_R\setminus\mathcal{B}_{R/2}}|A|^r d\mu=0}, &\mbox{if} \ G \ \mbox{is bounded},\\
\end{cases}
\]
then $M$ satisfies 
\begin{equation}\label{eq-equiv}
S_{r}\equiv S_{r+1}\equiv 0\equiv\lan G(\rho)\overline\nabla\rho,\eta\ran,
\end{equation} 
(assuming $c\neq 0$). The classification of hypersurfaces satisfying \eqref{eq-equiv} depends on the ambient space we are considering. In the space form $\Q_c^{m+1}$, for example, by using the results in \cite{DT} and \eqref{eq-equiv}, we can conclude that $M$ is totally geodesic if $\alpha>0$ and that the hypersurface does not exist if $\alpha<0$.
\end{remark}

\section*{Acknowledgements}
The authors would like to thank the referee for reading the manuscript in great detail and for his valuable suggestions and useful comments, which improved the paper.

\section*{Funding}
The authors were partially supported by the Alagoas Research Foundation (FAPEAL), by the National Council for Scientific and Technological Development (CNPq), and Coordination for the Improvement of Higher Education Personnel (CAPES), Brazil.

\bibliographystyle{acm}
\bibliography{references}

\end{document}